\title[ Learning rates for the dependence structure of rare events]{Learning the dependence structure of rare events: \\ a non-asymptotic study}
\newcommand{\ie}{\emph{i.e.}{}}
\newcommand{\eg}{\emph{e.g.}{}}
\newcommand\iid{\ensuremath{\mathit{i.i.d.}}\ }
\newcommand{\rv}{\emph{r.v.}{}}
\DeclareMathOperator{\argmin}{argmin}
\def\Var{\mathbf{Var}}
\def\P{\mathbb{P}}
\def\mb{\mathbf}
\def\bb{\mathbb}
\def\point{\,\cdot\,}
\begin{document} 
\normalem
\maketitle

\begin{abstract} 

Assessing the probability of occurrence of extreme events  is a crucial issue in various fields like finance, insurance, telecommunication or environmental sciences. In a multivariate framework, the tail dependence is characterized by the so-called \emph{stable tail dependence function} (\textsc{stdf}). Learning this structure is the keystone of multivariate extremes. Although extensive studies have proved consistency and asymptotic normality for the empirical version of the \textsc{stdf}, non-asymptotic bounds are still missing. The main purpose of this paper is to fill this gap. Taking advantage of adapted VC-type concentration inequalities, upper bounds are derived with expected rate of convergence in $O(k^{-1/2})$. The concentration tools involved in this analysis rely on a more general study of maximal deviations in low probability regions, and thus directly apply to the classification of extreme data. \\


\noindent
\textsc{Keywords:} VC theory, multivariate extremes, stable tail dependence function, concentration inequalities, extreme data classification.

\end{abstract} 

\section{Introduction}
\label{sec:intro}
Extreme Value Theory (\textsc{EVT}) develops models for learning  the
unusual rather than the usual. These  models are widely used in fields
involving risk management like finance, insurance, telecommunication
or environmental sciences. 
One major application of \textsc{EVT} is to provide a reasonable
assessment of the probability of occurrence of rare events. 
To illustrate this point,  suppose we want to manage the risk of a
  portfolio  containing $d$ different assets,  $\mb
X = (X_1,\ldots,X_d)$.  
A  fairly general purpose is then to evaluate  the
probability of events of the kind 
$\{X_1 \ge x_1 \text{ or }  \dotsc \text{ or }
X_d\ge x_d \}$, for large multivariate thresholds $\mb
x=(x_1,\ldots,x_d)$.  
 Under not too stringent conditions on the regularity of
  $\mb X$'s  distribution, \textsc{EVT} shows that for large enough
  thresholds, (see Section~\ref{sec:background} for details) 
\[
\P\{X_1 \ge x_1 \text{ or }  \dotsc \text{ or }
X_d\ge x_d \} \simeq 
l(p_1,\ldots,p_d), 
\]  
where $l$ is the  \emph{stable tail dependence function} and the
$p_j$'s  are the marginal exceedance probabilities, $p_j = \P(X_j\ge
x_j)$. Thus, the functional $l$  characterizes 
 the \emph{dependence} among extremes. The \emph{joint}   distribution
 (over large thresholds) 
 can thus be recovered from  the knowledge of the marginal distributions  together with
 the \textsc{stdf} $l$. In practice, $l$ can be learned 
 from
 `moderately extreme' data, typically the $k$   `largest' ones among a
 sample of size $n$, with $k\ll n$.
Recovering the $p_j$'s can be done following a well paved way: in the univariate case, \textsc{EVT} essentially consists in modeling
the distribution of the maxima (\emph{resp.} the upper tail) as a generalized
extreme value distribution, namely an element of the Gumbel, Fréchet
or Weibull parametric families (\emph{resp.} by 
 a  generalized Pareto distribution).

In contrast, in the multivariate case, 
there is no finite-dimensional parametrization of the dependence
structure. 
The latter is characterized by 
the so-called \emph{stable tail dependence function} (\textsc{stdf}).
 Estimating this functional is thus one of the main issues in multivariate \textsc{EVT}. Asymptotic properties of the empirical \textsc{stdf} have been widely studied, see \cite{Huangphd}, \cite{Drees98}, \cite{Embrechts2000} and \cite{dHF06} for the bivariate case, and \cite{Qi97}, \cite{Einmahl2012} for the general multivariate case under smoothness assumptions.

However, to the best of our knowledge, no bounds exist  
on the finite sample error. It is precisely the purpose
of this paper to derive such non-asymptotic  bounds. Our results do
not require any assumption other than  the existence of the \textsc{stdf}.
The main idea is as follows. The empirical estimator is based on 
the empirical measure of `extreme' regions, which  are hit
 only with  low probability. It is thus enough to bound 
 maximal deviations on such low probability regions. The key consists
 in choosing an adaptive VC class, which only covers the
 latter regions, and on the other hand, to  derive  VC-type inequalities that incorporate $p$,
 the probability of hitting the class at all.

The structure of the paper is as follows. The whys and wherefores of
 \textsc{EVT} and the \textsc{stdf} are explained in
 Section~\ref{sec:background}. In Section \ref{sec:concentration},
 concentration tools which rely on the general study of maximal
 deviations in low probability regions are introduced, with an
 immediate application to the framework of classification (Remark
 \ref{rk:prediction}). The main result of the paper,  a
 non-asymptotic bound on the convergence of the empirical
 \textsc{stdf}, is derived in Section \ref{sec:stdf}. Section~\ref{sec:conclusion} concludes.

\section{Background in extreme value theory}\label{sec:background}
A  useful setting to understand the use  of \textsc{EVT} and to give
intuition about the \textsc{stdf} concept is that  of risk monitoring. 
In the univariate case, it is natural to consider the $(1-p)^{th}$
quantile of the distribution  $F$ of a random
variable $X$,  for a given exceedance probability $p$, that is
$x_p = \inf\{x \in \mathbb{R},~ \mathbb{P}(X > x) \le p\}$. For
moderate values of $p$, a natural empirical estimate is  $x_{p,n} = \inf\{x \in
\mathbb{R},~ 1/n \sum_{i=1}^n \mathds{1}_{X_i > x}\le p\}$.
However,  if
$p$ is very small
, the finite  sample $X_1,
\ldots, X_n$  contains insufficient information and $x_{p,n}$ becomes 
irrelevant. 
That is where \textsc{EVT} comes into play  by providing
parametric estimates of large
quantiles: 
whereas statistical inference often involves sample means and the
central limit
theorem, 
\textsc{EVT} handles phenomena whose behavior is 
not ruled by an `averaging effect'. The focus is on the sample maximum
rather than the mean. The primal assumption is the existence of two
sequences $\{a_n, n \ge 1\}$ and $\{b_n, n \ge 1\}$, the $a_n$'s being
positive, and a non-degenerate distribution function $G$ such that
\begin{align}
\label{intro:assumption1}
\lim_{n \to \infty} n ~\mathbb{P}\left( \frac{X - b_n}{a_n} ~\ge~ x \right) = -\log G(x)
\end{align}
\noindent
for all continuity points $x \in \mathbb{R}$ of $G$.
If this assumption is fulfilled -- it is the case for most  textbook
distributions -- then $F$ is said to be in the \textit{domain of
  attraction} of $G$, denoted $F \in DA(G)$. The tail behavior of $F$
is then essentially characterized by $G$, which is proved to be -- up
to  rescaling -- of the type $G(x) = \exp(-(1 + \gamma
x)^{-1/\gamma})$ for $1 + \gamma x > 0$, $\gamma \in \mathbb{R}$,
setting by convention $(1 + \gamma x)^{-1/\gamma} = e^{-x}$ for
$\gamma = 0$. The sign of $\gamma$ controls the shape of the tail and
various estimators of the rescaling sequence as well as $\gamma$ have
been studied in great detail, see \emph{e.g.}  \cite{DEd1989},
\cite{ELL2009}, 
\cite{Hill1975}, \cite{Smith1987}, \cite{BVT1996}. 

In the multivariate case, it is mathematically very convenient to decompose the joint
distribution of $\mb X = (X^1,\ldots, X^d)$  
into the margins on the
one hand, and the dependence structure on the other hand. In
particular, handling uniform margins is very helpful when it comes
to establishing upper bounds on the deviations between empirical and mean
 measures. Define thus  standardized variables $U^j = 1 - F_j(X^j)$,
 where $F_j$ is the marginal distribution function of $X^j$, and
 $\mathbf{U} = (U^1,\dotsc,U^d)$. Knowledge of the $F_j$'s and of the
 joint distribution of $\mb U$ allows to recover that of $\mb X$, 
 since  $\P(X_1\le x_1,\ldots, X_d\le x_d) = \P(U^1 \ge
 1 -  F_1(x_1),\ldots,U^d\ge 1-F_d(x_d))$.  
  With these notations,  under a fairly general assumption
similar to 
(\ref{intro:assumption1}) 
(namely, standard multivariate regular variation of standardized variables, see \eg~\cite{Resnick2007}, chap. 6), there exists a limit measure
$\Lambda$ on $[0,\infty]^d \setminus\{\infty\}$ (called the
\emph{exponent measure}) such that 
\begin{align}
\label{stdf1}
\lim_{t \to 0} t^{-1} \mathbb{P} \left[
 U^1 \le t\, x_1
   ~\text{or}~ \ldots ~\text{or}~ U^d \le t\,x_d  \right]
 = \Lambda[\mb x, \infty]^c :=l(\mb x)~. \qquad(x_j\in[0,\infty], \mb x\neq\infty)
\end{align}
Notice that no assumption is made about the marginal distributions, so that our framework allows non-standard regular variation, or even no regular variation at all of the original data $\mb X$ (for more details see \eg~\cite{Resnick2007}, th. 6.5 or \cite{Resnick1987}, prop. 5.10.).
The functional $l$ in the limit in (\ref{stdf1}) is called the \emph{stable tail
  dependence function}. 
In the remainder of this paper, the only assumption is the existence
of a limit in (\ref{stdf1}), \ie, the existence of the \textsc{stdf}.

We emphasize that the knowledge of both $l$ and the margins gives
access to the probability of hitting `extreme' regions of the kind
$[\mb 0, \mb x]^c$, for  `large' thresholds 
$\mb x = (x_1,\ldots,x_d)$ (\ie~such that for some $ j \le d$, $ 1-F_j(x_j) $ is   a
$O(t)$   for some small
$t$). Indeed, in such a case, 
\begin{align*}
\mathbb{P}(X^1>x_1 \text{ or } \ldots \text{ or } X^d>x_d) 
&= \mathbb{P}\left(\bigcup_{j=1}^d (1-F_j)(X^j) \le (1-F_j)(x_j)
\right)  \\
&= t \, \left\{\frac{1}{t}\mathbb{P}\left(\bigcup_{j=1}^d U^j \le t\,
    \left[\frac{ (1-F_j)(x_j)}{t}  \right]
\right)\right\} \\
&\underset{t\to 0}{\sim} \; ~t ~l\Big( t^{-1}\,(1-F_1)(x_1),\;  \ldots, \;  t^{-1}\, (1-F_d)(x_d) \Big)  \\
&=  ~~l\Big((1-F_1)(x_1), \;\ldots,\; (1-F_d)(x_d) \Big)     
\end{align*}
where the  last equality follows from the homogeneity of $l$.
  This underlines the utmost importance of estimating the \textsc{stdf} and
by extension stating non-asymptotic bounds on this convergence.

\noindent
Any stable tail dependence function $l(.)$ is in fact a norm, (see \cite{Falk94}, p179) and satisfies $$\max\{x_1,\ldots, x_n\} ~\le~ l(\mathbf{x}) ~\le~ x_1 + \ldots + x_d, $$ where the lower bound is attained if $\mathbf{X}$ is perfectly tail dependent (extremes of univariate marginals always occur simultaneously), and the upper bound in case of tail independence or asymptotic independence (extremes of univariate marginals never occur simultaneously).
We refer to \cite{Falk94} for more details and properties on the
\textsc{stdf}.

\section{A VC-type inequality adapted to the study of low probability regions}
\label{sec:concentration}

%
%
%
Classical VC inequalities aim at bounding the deviation of empirical
from theoretical quantities on relatively simple classes of sets,
called VC classes. These classes typically cover the support of the
underlying distribution.  However, when dealing with rare
events, 
it is of great interest to have such bounds on a class of sets
which only covers a small probability region and thus contains (very)
few 
observations. This yields sharper
bounds,  
 since only differences  between very small quantities are
involved. 
The starting point of this analysis is the following VC-inequality stated below.




\begin{theorem}
\label{thm-princ} 
Let $\mathbf{X}_1,\ldots,\mathbf{X}_n$ \iid~realizations of a \rv~$\mathbf{X}$, a VC-class $\mathcal{A}$ with VC-dimension $V_{\mathcal{A}}$ and shattering coefficient (or growth function) $S_{\mathcal{A}}(n)$.
Consider the class union $\mathbb{A} = \cup_{A \in \mathcal{A}} A$,
 and let  
$p = \mathbb{P}(\mathbf{X} \in \mathbb{A})$. Then there is an absolute constant $C$ such that for all $0<\delta<1$, with probability at least $1-\delta$,
\begin{align}
\label{thm-princ-ineq}
\sup_{A \in \mathcal{A}} \left| \mathbb{P} \big[\mathbf{X} \in A\big] - \frac{1}{n} \sum_{i=1}^n \mathds{1}_{\mathbf{X}_i \in A}  \right| ~~\le~~ C \bigg[ \sqrt{p}\sqrt{\frac{V_{\mathcal{A}}}{n} \log{\frac{1}{\delta}}} + \frac{1}{n} \log{\frac{1}{\delta}} \bigg]~.
\end{align}
\end{theorem}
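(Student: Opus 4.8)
The plan is to bound the supremum of empirical deviations by combining a standard symmetrization/VC argument with the refinement that, because all sets $A\in\mathcal{A}$ live inside $\mathbb{A}$ which has small mass $p$, the relevant variances are controlled by $p$ rather than by a universal constant. The two terms on the right-hand side of (\ref{thm-princ-ineq}) should be read as the ``Bernstein-type'' signature of a bounded-difference functional whose variance proxy is $\sigma^2 \asymp p V_{\mathcal{A}}/n$ and whose range contributes the $\frac1n\log\frac1\delta$ correction.

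\medskip

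\noindent\textbf{Step 1: Bounded differences / concentration around the mean.}
First I would view $Z = \sup_{A\in\mathcal{A}}\big|\mathbb{P}[\mathbf{X}\in A] - \tfrac1n\sum_i \mathds{1}_{\mathbf{X}_i\in A}\big|$ as a function of the i.i.d.\ sample. Changing one $\mathbf{X}_i$ alters $Z$ by at most $1/n$, so McDiarmid's inequality gives $\mathbb{P}(Z \ge \mathbb{E}Z + u)\le \exp(-2nu^2)$. However, the naive McDiarmid bound produces a deviation term $\sqrt{\tfrac1n\log\frac1\delta}$ with no $\sqrt{p}$ factor, which is too weak. The right tool is a \emph{Bernstein--Talagrand} concentration inequality for the supremum of an empirical process, which replaces the uniform range $1/n$ by the wimpy variance. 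The crucial observation is that for every $A\subseteq\mathbb{A}$, $\mathrm{Var}(\mathds{1}_{\mathbf{X}\in A})\le \mathbb{P}(\mathbf{X}\in A)\le \mathbb{P}(\mathbf{X}\in\mathbb{A}) = p$, so the supremum of the variances over $\mathcal{A}$ is at most $p$. Talagrand's inequality then yields, with probability $1-\delta$,
\begin{equation*}
Z ~\le~ \mathbb{E}Z + C_1\sqrt{\tfrac{p}{n}\log\tfrac1\delta} + \tfrac{C_2}{n}\log\tfrac1\delta.
\end{equation*}

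\medskip

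\noindent\textbf{Step 2: Bounding the expectation $\mathbb{E}Z$.}
The remaining task is to show $\mathbb{E}Z \le C_3\sqrt{p\,V_{\mathcal{A}}/n}$, i.e.\ that the expected supremum also carries the $\sqrt{p}$ factor. I would symmetrize, introducing Rademacher variables $\varepsilon_i$, to get $\mathbb{E}Z \le 2\,\mathbb{E}\,\sup_{A}\big|\tfrac1n\sum_i \varepsilon_i \mathds{1}_{\mathbf{X}_i\in A}\big|$, then bound the conditional Rademacher average via a chaining / Dudley entropy integral. Because the $L^2(\mathbb{P}_n)$ radius of the class $\{\mathds{1}_{\mathbf{X}\in A}: A\in\mathcal{A}\}$ is governed by $\sqrt{\tfrac1n\sum_i\mathds{1}_{\mathbf{X}_i\in\mathbb{A}}}\approx\sqrt p$, and the metric entropy of a VC class of dimension $V_{\mathcal{A}}$ grows like $V_{\mathcal{A}}\log(1/\epsilon)$ (Haussler's bound, using $S_{\mathcal{A}}(n)$), the entropy integral evaluates to $O\big(\sqrt{p\,V_{\mathcal{A}}/n}\big)$ rather than $O\big(\sqrt{V_{\mathcal{A}}/n}\big)$. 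Combining with Step 1 and adjusting the absolute constant $C$ gives (\ref{thm-princ-ineq}).

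\medskip

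\noindent\textbf{The main obstacle} is propagating the $\sqrt{p}$ factor cleanly through the expectation term. A direct appeal to the textbook VC bound $\mathbb{E}Z = O(\sqrt{V_{\mathcal{A}}/n})$ loses the small-probability gain entirely, so the sharpness of the result hinges on a \emph{localized} argument: one must keep track of the empirical radius $\sqrt{\tfrac1n\sum_i\mathds{1}_{\mathbf{X}_i\in\mathbb{A}}}$ inside the chaining bound and then control the fluctuation of this random radius around its mean $\sqrt p$ (handled by the same Talagrand inequality applied to the single set $\mathbb{A}$). Care is also needed so that the variance proxy used in the Talagrand/Bernstein step is genuinely the supremum of individual variances bounded by $p$, and that the relatively crude range bound $1/n$ only enters the lower-order $\tfrac1n\log\tfrac1\delta$ term. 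Once the localization is set up correctly, the two pieces assemble into the claimed bound with a single absolute constant $C$.
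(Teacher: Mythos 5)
Your overall architecture coincides with the paper's: a Bernstein-type concentration step around the mean whose variance proxy is controlled by $p$, followed by a bound $\mathbb{E}Z = O(\sqrt{p V_{\mathcal{A}}/n})$ on the expectation. Step 1 is sound: the paper uses McDiarmid's Bernstein-type bounded-differences inequality, with maximum deviation $1/n$ and maximum sum of variances $\hat v \le q/n$ where $q = \mathbb{E}\sup_{A}|\mathds{1}_{\mathbf{X}'\in A}-\mathds{1}_{\mathbf{X}\in A}| \le 2p$, rather than Talagrand's inequality, but either route yields $Z \le \mathbb{E}Z + C\sqrt{\frac{p}{n}\log\frac{1}{\delta}} + \frac{C}{n}\log\frac{1}{\delta}$.

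The genuine gap is in Step 2. A Dudley/Haussler entropy integral truncated at the empirical radius $\hat\sigma\approx\sqrt{p}$ evaluates to $\frac{1}{\sqrt n}\int_0^{\sqrt p}\sqrt{V_{\mathcal{A}}\log(1/\epsilon)}\,\ud\epsilon \asymp \sqrt{p\,V_{\mathcal{A}}\log(1/p)/n}$, not $\sqrt{p\,V_{\mathcal{A}}/n}$: the log-free rate $O(\sqrt{V_{\mathcal{A}}/n})$ for VC classes comes from integrating the entropy up to radius of order one, and once you localize to radius $\sqrt p$ the factor $\sqrt{\log(1/\hat\sigma)}$ reappears. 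Using only the variance bound $\sup_A \P(\mathbf{X}\in A)\le p$, this factor is in general unavoidable; in the paper's application ($p\asymp k/n$) it is of order $\sqrt{\log(n/k)}$, essentially the same loss that the ``Comparison with Existing Bounds'' remark attributes to the normalized VC inequality and that Theorem~\ref{thm-princ} is designed to remove. The paper's fix exploits the containment of every $A$ in $\mathbb{A}$ more strongly than through the radius: it writes $\sum_i\sigma_i\mathds{1}_{\mathbf{X}_i\in A} \overset{d}{=} \sum_{i=1}^{\kappa}\sigma_i\mathds{1}_{\mathbf{Y}_i\in A}$ with $\kappa\sim \mathrm{Bin}(n,p)$ and $\mathbf{Y}_i$ drawn from the law of $\mathbf{X}$ conditioned on $\{\mathbf{X}\in\mathbb{A}\}$, applies the full-radius bound $\mathcal{R}_\kappa\le C\sqrt{V_{\mathcal{A}}/\kappa}$ conditionally on $\kappa$, and concludes by Jensen that $\mathcal{R}_n\le C\,\mathbb{E}[\sqrt\kappa]\sqrt{V_{\mathcal{A}}}/n\le C\sqrt{pV_{\mathcal{A}}/n}$. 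You need this conditioning device (or an equivalent one) to obtain (\ref{thm-princ-ineq}) exactly as stated; as written, your chaining argument proves the theorem only up to an extra $\sqrt{\log(1/p)}$ factor.
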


\begin{proof} (sketch of)
Details of the proof are deferred to the appendix section.
We use a Bernstein-type concentration inequality (\cite{McDiarmid98}) that we  apply to 
the general  functional \[f(\mathbf{X}_{1:n})= \sup_{A \in \mathcal{A}} \left |
  \mathbb{P}(\mathbf{X} \in A) - \frac{1}{n} \sum_{i=1}^n
  \mathds{1}_{\mathbf{X}_i \in A} \right|~,\]
where $\mb X_{1:n}$ denotes the sample $(\mb X_1,\ldots,\mb X_n)$.
The inequality in \cite{McDiarmid98} 
involves
the variance of the
\rv~$f(\mathbf{X}_1,\ldots,\mathbf{X}_{k}, x_{k+1},\ldots, x_n) -
f(\mathbf{X}_1,\ldots,\mathbf{X}_{k-1},x_k,\ldots, x_n)$, which can
easily be bounded in our setting. 
We obtain 
\begin{align}
\label{thm-princ:general}
\mathbb{P}\left [ f(\mb X_{1:n}) - \mathbb{E} f(\mb X_{1:n}) ~\ge~ t \right] ~\le~ e^{-\frac{n t^2}{2q + \frac{2t}{3}} },
\end{align}

\noindent
where the quantity $q~=~ \mathbb{E}\left ( \sup_{A \in \mathcal{A}}
  \left | \mathds{1}_{\mathbf{X}' \in A} - \mathds{1}_{\mathbf{X} \in
      A} \right|\right)$ (with  $\mathbf{X}'$ an independent copy of $\mathbf{X}$) is  a measure of the complexity of the class $\mathcal{A}$ with respect to the distribution of $\mathbf{X}$. 
It leads to high probability bounds on $f(\mb X_{1:n})$ of the form
$\mathbb{E}f(\mb X_{1:n}) + \frac{1}{n} \log (1/\delta) +
\sqrt{\frac{2q}{n} \log (1/\delta)} $ instead of the standard
Hoeffding-type bound  $~\mathbb{E}f(\mb X_{1:n}) + \sqrt{\frac{1}{n} \log (1/\delta)}$ .
It is then easy to see that $q \le 2\sup_{A \in \mathcal{A}} \mathbb{P}(\mathbf{X} \in A) \le 2p.$
Finally, an  upper bound on
$\bb E f(\mb X_{1:n})$ is obtained  by introducing re-normalized Rademacher averages 
\begin{align*}
\mathcal{R}_{n,p} = \mathbb{E} \sup_{A \in \mathcal{A}} \frac{1}{np} \left | \sum_{i=1}^{n} \sigma_i \mathds{1}_{\mathbf{\mathbf{X}}_i \in A}\right|~. 
\end{align*}
which are then proved to be of order $O (\sqrt{\frac{V_\mathcal{A} }{pn}})$, so that
$\mathbb{E}(f(\mb X_{1:n})) \le C\sqrt{\frac{V_\mathcal{A} }{pn}}.$
\end{proof}


\begin{remark} (\textsc{Comparison with Existing Bounds})
The following re-normalized VC-inequality due to Vapnik and Chervonenkis (see \cite{Vapnik74}, \cite{Anthony93} or \cite{Bousquet04}, Thm 7),
\begin{align} \label{normalize-vc}
\sup_{A \in \mathcal{A}} \left| \frac{ \mathbb{P} (\mathbf{X} \in A) - \frac{1}{n} \sum_{i=1}^n \mathds{1}_{\mathbf{X}_i \in A}  }{\sqrt{\mathbb{P}(\mathbf{X} \in A)}} \right| ~~\le~~ 2 \sqrt{\frac{\log{S_{\mathcal{A}}(2n)}+\log{\frac{4}{\delta}}}{n}}~,
\end{align}
which holds under the same conditions as Theorem~\ref{thm-princ}, allows to derive a bound similar to (\ref{thm-princ-ineq}), but with an additional $\log n$ factor.
Indeed, it is known as Sauer's Lemma (see \cite{Bousquet04}-lemma 1 for instance) that for $n \ge V_{\mathcal{A}}$, $S_{\mathcal{A}}(n) \le (\frac{en}{V_{\mathcal{A}}})^{V_{\mathcal{A}}}$. It is then easy to see from (\ref{normalize-vc}) that:
\begin{align*}
\sup_{A \in \mathcal{A}} \left | \mathbb{P} (\mathbf{X} \in A) - \frac{1}{n} \sum_{i=1}^n \mathds{1}_{\mathbf{X}_i \in A}  \right| ~~\le~~ 2 \sqrt{\sup_{A \in \mathcal{A}}\mathbb{P}(\mathbf{X} \in A)}  \sqrt{\frac{V_{\mathcal{A}}\log{\frac{2en}{V_{\mathcal{A}}}}+\log{\frac{4}{\delta}}}{n}}~.
\end{align*}
Introduce the union $\mathbb{A}$ of all sets  in the  considered VC
class, $\mathbb{A} = \cup_{A \in \mathcal{A}} A$, and let $p =
\mathbb{P}\left ( \mathbf{X} \in \mathbb{A}\right)$. Then, the  previous bound immediately yields 
\begin{align*}
\sup_{A \in \mathcal{A}} \left | \mathbb{P} (\mathbf{X} \in A) - \frac{1}{n} \sum_{i=1}^n \mathds{1}_{\mathbf{X}_i \in A}  \right| ~~\le~~ 2\sqrt p  \sqrt{\frac{V_{\mathcal{A}}\log{\frac{2en}{V_{\mathcal{A}}}}+\log{\frac{4}{\delta}}}{n}}~.
\end{align*}

\end{remark}

\noindent

\begin{remark} (\textsc{Simpler Bound})
If we assume furthermore that $\delta \ge e^{-np}$, then we have:
\begin{align*}
\sup_{A \in \mathcal{A}} \left | \mathbb{P}(\mathbf{X} \in A) - \frac{1}{n} \sum_{i=1}^n \mathds{1}_{\mathbf{X}_i \in A} \right| ~~\le~~ C \sqrt{p} \sqrt{\frac{V_{\mathcal{A}}}{n} \log{\frac{1}{\delta}}}~.
\end{align*}
\end{remark}

\begin{remark} (\textsc{Interpretation})
\label{rk:interpretation}
Inequality (\ref{thm-princ-ineq}) can be seen as an interpolation
between the best case (small $p$) where the rate of convergence is
$O(1/n)$,  
and the worst case (large $p$) where the rate is $O(1/\sqrt{n})$.
An alternative interpretation is as follows: divide both sides of
(\ref{thm-princ-ineq}) by $p$, so that the left hand side becomes  a
supremum of 
conditional probabilities   upon belonging to the union class
$\mathbb{A}$, $\{\mathbb{P}(\mathbf{X}\in A \big|\mathbf{X}\in \mathbb{A}) \}_{A\in\mathbb{A}}$. Then the upper bound is proportional to $\epsilon(np, \delta)$ where $\epsilon(n, \delta) :=\sqrt{\frac{V_{\mathcal{A}}}{n} \log{\frac{1}{\delta}}} + \frac{1}{n} \log{\frac{1}{\delta}}$ is a classical VC-bound; $np$ is in fact the expected number of observations involved in (\ref{thm-princ-ineq}), and can thus be viewed as the effective sample size. 
\end{remark}


\begin{remark} \label{rk:prediction}(\textsc{Classification of Extremes})
A key issue in the prediction framework is to find upper bounds for the maximal deviation $\sup_{g \in \mathcal{G}}|L_n(g) - L(g)|$, where $L(g) = \mathbb{P}(g(\mathbf{X}) \neq Y)$ is the risk of the classifier $g: \mathcal{X} \to \{-1, 1\}$, associated with the \rv~$(\mathbf{X},Y) \in \mathbb{R}^d \times \{-1,1\}$. $L_n(g) = \frac{1}{n} \sum_{i=1}^n \mathds{I}\{g(\mathbf{X}_i)\neq Y_i\} $ is the empirical risk based on a training dataset $\{(\mathbf{X}_1,Y_1),\; \ldots,\; (\mathbf{X}_n,Y_n)  \}$. Strong upper bounds on $\sup_{g \in \mathcal{G}}|L_n(g) - L(g)|$ ensure the accuracy of the empirical risk minimizer $g_n:= \argmin_{g \in \mathcal{G}}L_n(g)$. 

In a wide variety of applications (\textit{e.g.} Finance, Insurance, Networks), it is of crucial importance to predict the system response $Y$ when the input variable $\mathbf{X}$ takes extreme values, corresponding to shocks on the underlying mechanism. In such a case, the risk of a prediction rule $g(\mathbf{X})$ should be defined by integrating the loss function $L(g)$ with respect to the conditional joint distribution of the pair $(\mathbf{X},Y)$ given $\mathbf{X}$ is extreme. For instance, consider the event $\{\|\mathbf{X}\| \ge t_\alpha\}$ where $t_\alpha$ is the $(1-\alpha)^{th}$ quantile of $\|\mathbf{X}\|$ for a small $\alpha$. To investigate the accuracy of a classifier $g$ given $\{\|\mathbf{X}\| \ge t_\alpha\}$,
introduce 
\begin{align*}
L_{\alpha}(g):~=~ \frac{1}{\alpha}\mathbb{P}\left(Y\neq g(\mathbf{X}),~ \| \mathbf{X}\|>t_\alpha \right)~=~\mathbb{P}\left(Y \neq g(\mathbf{X}) ~\big|~ \|\mathbf{X}\| \ge t_\alpha \right)~,
\end{align*}
\noindent
 and its empirical
 counterpart \[L_{\alpha,n}(g):~=~\frac{1}{n\alpha}\sum_{i=1}^n\mathds{I}_{\{Y_i\neq
   g(\mathbf{X}_i),~ \| \mathbf{X}_i\| > \|  \mathbf{X}_{(\lfloor n\alpha \rfloor)} \|  \}}~,\]
 where $\| \mathbf{X}_{(1)}\| \geq \ldots \geq \| \mathbf{X}_{(n)}\|$ are the order
 statistics of $\| \mathbf{X}\|$. Then as an application of Theorem \ref{thm-princ} with $\mathcal{A} = \{(\mathbf{x},y), g(\mathbf{x}) \neq y, \|\mathbf{x}\| > t_\alpha\},~g\in\mathcal{G},$ we have : 
\begin{align}
\label{prediction:rates}
\sup_{g\in \mathcal{G}} \bigg| \widehat{L}_{\alpha, n}(g)- L_{\alpha}(g) \bigg|  \le C \bigg[ \sqrt{\frac{V_{\mathcal{G}}}{n\alpha} \log \frac{1}{\delta}} + \frac{1}{n\alpha} \log{\frac{1}{\delta}} \bigg]~.
\end{align}
We refer to the appendix for more details. Again the obtained rate by
empirical risk minimization  meets our expectations (see remark \ref{rk:interpretation}), insofar as $\alpha$ is the fraction 
of the dataset involved in the empirical risk $L_{\alpha, n}$. We point out that $\alpha$ may typically depend on $n$, $\alpha = \alpha_n \to 0$.
In this context a direct use of the 
standard version of the \textsc{VC} inequality would lead to a rate  of order $1/(\alpha_n\sqrt{n})$, which may not vanish as $n\rightarrow +\infty$ and even go to infinity if $\alpha_n$ decays to $0$ faster than $1/\sqrt{n}$ . 

Let us point out that rare events may be chosen more general than
$\{\|\mathbf{X}\| > t_\alpha \}$, say $\{\mathbf{X} \in Q \}$ with unknown probability
$q=\mathbb{P}(\{\mathbf{X} \in Q \})$. The previous result still applies with
$\widetilde L_Q(g) := \mathbb{P}\left ( Y \neq g(\mathbf{X}), \mathbf{X} \in Q\right)$
and $\widetilde L_{Q,n}(g) := \mathbb{P}_n\left ( Y \neq g(\mathbf{X}), \mathbf{X} \in
  Q\right)$; then the obtained upper bound on $\sup_{g \in
  \mathcal{G}} \frac{1}{q} \left |  \widetilde L_Q(g) - \widetilde
  L_{Q,n}(g) \right|$ is of order $O(1/\sqrt{qn}). $ 

Similar results can be established for the problem of \textit{distribution-free regression}, when the error of any predictive rule $f(\mathbf{x})$ is measured by the conditional mean squared error $\mathbb{E}[(Z-f(\mathbf{X}))^2\mid Z>q_{\alpha_n}]$, denoting by $Z$ the real-valued output variable to be predicted from $\mathbf{X}$ and by $q_{\alpha}$ its quantile at level $1-\alpha$.

\end{remark}

\section{A bound on the STDF}
\label{sec:stdf}

Let us place ourselves in the multivariate extreme framework introduced in Section \ref{sec:intro}:
 Consider a random variable $\mathbf{X} = (X^1, \ldots X^d)$ in
 $\mathbb{R}^d$ with 
distribution function $F$ and marginal distribution functions
$F_1,\ldots,F_d$.
Let $\mathbf{X_1,X_2,\ldots,X_n}$ be an \iid~sample distributed as $\mb X$.
In the subsequent analysis, the only assumption is the existence of
the \textsc{stdf} defined in  (\ref{stdf1}) and
the margins $F_j$ are supposed to be unknown. The
definition of $l$ may be  recast as
\begin{align}
\label{stdf}
l(\mathbf{x}):= \lim_{t \to 0} t^{-1} \tilde F (t\mathbf{x}) 
\end{align}
\noindent
with $\tilde F (\mathbf{x}) = (1-F) \big( (1-F_1)^\leftarrow(x_1),\ldots,
(1-F_d)^\leftarrow(x_d)  \big)$. Here the notation
$(1-F_j)^\leftarrow(x_j)$ denotes the quantity $\sup\{y\,:\; 1-F_j(y)
\ge x_j\}$. Notice that, in terms of standardized variables $U^j$, 
$\tilde F(\mb x) = \P\Big(\bigcup_{j=1}^d\{U^j\le x_j\}\Big) = \P(\mb
U\in [\mb x, \infty[^c)$.

Let $k=k(n)$ be a sequence of positive integers such that $k \to
\infty$ and $k=o(n)$ as $n \to \infty$.
A natural estimator of $l$ is its empirical version defined as
follows,  see \cite{Huangphd}, \cite{Qi97}, \cite{Drees98}, \cite{Einmahl2006}:
\begin{align}
\label{ln}
l_n(\mathbf{x})=\frac{1}{k}~\sum_{i=1}^{n} \mathds{1}_{\{X_i^1 \ge X^1_{(n-\lfloor kx_1 \rfloor+1)} \text{~~or~~} \ldots \text{~~or~~} X_i^d \ge X^d_{(n-\lfloor kx_d\rfloor+1)} \}}~,
\end{align}
\noindent
 The expression is indeed suggested by the definition of $l$ in
 (\ref{stdf}), with all distribution functions and  univariate
 quantiles replaced by their empirical counterparts, and with $t$
 replaced by $k/n$. Extensive studies have proved consistency and 
 asymptotic normality of this nonparametric estimator of $l$, see \cite{Huangphd}, \cite{Drees98} and \cite{dHF06} for the asymptotic normality in dimension $2$, \cite{Qi97} for consistency in arbitrary dimension, and \cite{Einmahl2012} for asymptotic normality in arbitrary dimension under differentiability conditions on $l$.

To our best knowledge, there is no established non-asymptotic bound on the maximal deviation $\sup_{0 \le \mathbf{x} \le T} \left| l_n(\mathbf{x}) - l(\mathbf{x}) \right|$. It is the purpose of the remainder of this section to derive such a bound, without any smoothness condition on $l$.

First, Theorem \ref{thm-princ} needs adaptation  to a   particular
setting: introduce  a random vector 
$\mathbf{Z}=(Z^1,\ldots,Z^d)$ with uniform margins, \ie, for every
$j=1,\ldots,d$, the variable $Z^j$ is uniform on $[0,1]$.  Consider
the class 
\[
\mathcal{A} = \left\{ \Big[  \frac{k}{n}\,
  \mathbf{x},\infty\Big[^{~c} \;:\quad \mb x \in \mathbb{R}^d_+ , \quad 0 \le x_j
\le T \; (1\le j\le d) \right\}
\]
This is a VC-class of
VC-dimension $d$,  as proved in \cite{Devroye96}, Theorem 13.8, for
its complementary class  $\big\{[\mathbf{x},\infty[ ,~ \mathbf{x}>0
\big\}$. 
In this context, the union class $\mathbb{A}$ has mass $p \le dT\frac{k}{n}$ since 
\begin{align*}
\mathbb{P}(\mathbf{Z} \in \mathbb{A}) = \mathbb{P} \left[ \mathbf{Z} \in \left(\Big[\frac{k}{n}T,\infty\Big[^d\right)^c\right] = \mathbb{P} \left[ \bigcup_{j=1..d} \mathbf{Z}^j < \frac{k}{n}T \right] \le \sum_{j=1}^d \mathbb{P} \left[ \mathbf{Z}^j < \frac{k}{n}T \right]
\end{align*}
\noindent
Consider the measures $C_n(\point)=\frac{1}{n} \sum_{i=1}^{n}
\mathds{1}_{\{Z_i \in \point \}}$ and $C(\mathbf{x})=\mathbb{P}(Z \in
\point)$. As a direct consequence of Theorem \ref{thm-princ} the
following inequality holds true  with probability at least $1-\delta$,

\begin{align*}
\sup_{0 \le \mathbf{x} \le T} \frac{n}{k} \left | C_n(\frac{k}{n} [\mathbf{x},\infty[^c) - C(\frac{k}{n} [\mathbf{x},\infty[^c)  \right| ~\le~ C d\left(\sqrt{\frac{T}{k} \log{\frac{1}{\delta}}} ~+~ \frac{1}{k} \log \frac{1}{\delta} \right)~.
\end{align*}
If we assume furthermore that $\delta \ge e^{-k}$, then we have 
\begin{align}
\label{Qialt2}
\sup_{0 \le \mathbf{x} \le T} \frac{n}{k} \left | C_n(\frac{k}{n} [\mathbf{x},\infty[^c) - C(\frac{k}{n} [\mathbf{x},\infty[^c)  \right| ~\le~ C d\sqrt{\frac{T}{k} \log{\frac{1}{\delta}}}~.
\end{align}
\noindent
Inequality (\ref{Qialt2}) is the cornerstone of the following theorem, which is the main result of the paper.
In the sequel, we consider a sequence $k(n)$ of integers such that $k=
o(n)$ and $k(n) \to \infty$. For notational convenience, we often 
drop the dependence in $n$ and simply write $k$ instead of $k(n)$. 
\begin{theorem}
\label{thm:l}
Let $T$ be a positive number such that $T \ge \frac{7}{2}(\frac{\log d}{k} + 1)$, and $\delta$ such that $\delta \ge e^{-k}$. Then there is an absolute constant $C$ such that for each $n >0$, with probability at least $1-\delta$:
\begin{align}
\label{thm:l:ineq}
\sup_{0 \le \mathbf{x} \le T} \left| l_n(\mathbf{x}) - l(\mathbf{x})
\right| ~\le~ Cd\sqrt{\frac{T}{k}\log\frac{d+3}{\delta}} ~+~ \sup_{0
  \le \mathbf{x} \le 2T}\left|\frac{n}{k} \tilde
  F(\frac{k}{n}\mathbf{x})- 
l(\mb x)\right|
\end{align}
\end{theorem}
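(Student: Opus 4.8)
The plan is to pass to the standardized variables $\mathbf{U}=(U^1,\ldots,U^d)$, $U^j=1-F_j(X^j)$, which are exactly the uniform-margin vector $\mathbf{Z}$ of (\ref{Qialt2}) and satisfy $\tfrac{n}{k}\tilde F(\tfrac{k}{n}\mathbf{x})=\tfrac{n}{k}C(\tfrac{k}{n}[\mathbf{x},\infty[^{c})$. Since ranks are invariant under the increasing maps $F_j$, the event $\{X_i^j\ge X^j_{(n-\lfloor kx_j\rfloor+1)}\}$ coincides with $\{U_i^j\le U^j_{(\lfloor kx_j\rfloor)}\}$, so $l_n$ is the \emph{same} empirical measure as the ``oracle'' $\tilde l_n(\mathbf{x}):=\tfrac{n}{k}C_n(\tfrac{k}{n}[\mathbf{x},\infty[^{c})$ but evaluated at the \emph{random} thresholds $U^j_{(\lfloor kx_j\rfloor)}$ instead of the deterministic $\tfrac{k}{n}x_j$. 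I would then split the deviation as
\begin{align*}
\sup_{0\le\mathbf{x}\le T}|l_n(\mathbf{x})-l(\mathbf{x})| &\le \sup_{0\le\mathbf{x}\le T}|l_n(\mathbf{x})-\tilde l_n(\mathbf{x})| + \sup_{0\le\mathbf{x}\le T}\Big|\tilde l_n(\mathbf{x})-\tfrac{n}{k}\tilde F(\tfrac{k}{n}\mathbf{x})\Big| \\
&\quad + \sup_{0\le\mathbf{x}\le T}\Big|\tfrac{n}{k}\tilde F(\tfrac{k}{n}\mathbf{x})-l(\mathbf{x})\Big|.
\end{align*}
The last term is dominated by the residual bias appearing in (\ref{thm:l:ineq}), and the middle term is controlled directly by (\ref{Qialt2}), giving $Cd\sqrt{(T/k)\log(1/\delta)}$.

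The crux is the first (quantile-substitution) term. I would first apply Theorem~\ref{thm-princ} \emph{margin by margin} to the VC-class of half-lines $\{[0,\tfrac{k}{n}s]:0\le s\le 2T\}$ (VC-dimension $1$, union mass $\le 2T\tfrac{k}{n}$): writing $G_n^j$ for the empirical c.d.f.\ of $U^j$, this yields, on an event of probability at least $1-\delta'$ and simultaneously in $j$,
\[
\eta \;:=\; \sup_{0\le s\le 2T}\tfrac{n}{k}\big|G_n^j(\tfrac{k}{n}s)-\tfrac{k}{n}s\big| \;\le\; C\sqrt{\tfrac{2T}{k}\log\tfrac{1}{\delta'}},
\]
the $1/k$ remainder of (\ref{thm-princ-ineq}) being absorbed because $\delta\ge e^{-k}$. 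Inverting this uniform c.d.f.\ control into a control of the order statistics (through $U^j_{(\lfloor kx_j\rfloor)}=\inf\{t:G_n^j(t)\ge\lfloor kx_j\rfloor/n\}$ and monotonicity of $G_n^j$) gives a two-sided sandwich $\tfrac{k}{n}(x_j-c\eta)\le U^j_{(\lfloor kx_j\rfloor)}\le \tfrac{k}{n}(x_j+\eta)$, valid for all $j$ and all $x_j\in[0,T]$.

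Because both $l_n$ and $\tilde l_n$ are non-decreasing in each coordinate (enlarging a threshold enlarges $[\cdot,\infty[^{c}$), the sandwich on the thresholds transfers to $\tilde l_n(\mathbf{x}-c\eta\mathbf{1})\le l_n(\mathbf{x})\le\tilde l_n(\mathbf{x}+\eta\mathbf{1})$ with $\mathbf{1}=(1,\ldots,1)$. Applying (\ref{Qialt2}) at the shifted arguments, then using that $l$ is a norm so that $l(\mathbf{x}\pm\eta\mathbf{1})\le l(\mathbf{x})+\eta\,l(\mathbf{1})\le l(\mathbf{x})+\eta d$ (homogeneity and subadditivity, together with $l(\mathbf{1})\le d$), and finally comparing $\tfrac{n}{k}\tilde F(\tfrac{k}{n}\cdot)$ to $l$, I obtain for every $\mathbf{x}\in[0,T]^d$ a two-sided bound $|l_n(\mathbf{x})-l(\mathbf{x})|\le C'd\sqrt{(T/k)\log(\cdots)}+\sup_{0\le\mathbf{x}\le 2T}|\tfrac{n}{k}\tilde F(\tfrac{k}{n}\mathbf{x})-l(\mathbf{x})|$. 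A union bound over the $d$ marginal events together with the constant number of VC events used turns $\log(1/\delta')$ into $\log\tfrac{d+3}{\delta}$, which is the announced inequality.

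The hard part is precisely this quantile-substitution step, and in particular making the sandwich legitimate: the upward shift $\mathbf{x}+\eta\mathbf{1}$ must stay inside $[0,2T]$ for (\ref{Qialt2}) and the bias term to apply, which forces $\eta\le T$. This is exactly what the hypothesis $T\ge\tfrac{7}{2}(\tfrac{\log d}{k}+1)$ secures once $\log\tfrac{d+3}{\delta}\le\log(d+3)+k$ is inserted into $\eta=C\sqrt{(2T/k)\log\tfrac{d+3}{\delta}}$, and it is also the reason the residual bias in (\ref{thm:l:ineq}) is taken up to $2T$ rather than $T$. I would finally note that $l_n(\mathbf{x})$ depends on $\mathbf{x}$ only through the finitely many integers $\lfloor kx_j\rfloor$, so all suprema reduce to maxima over a finite grid and no measurability issue arises.
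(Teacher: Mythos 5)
Your overall strategy is the same as the paper's: pass to uniform margins via the rank identity (this is exactly Lemma~\ref{ln-Fn}), apply the adapted VC bound (\ref{Qialt2}) on the enlarged box $[0,2T]$, control the marginal order statistics $U^j_{(\lfloor kx_j\rfloor)}$ around $\tfrac{k}{n}x_j$ by a one-dimensional instance of the same bound, use the norm property of $l$ to absorb the threshold perturbation, and leave the bias on $[0,2T]$. Your bookkeeping differs slightly — you bracket $l_n(\mathbf{x})$ between $\tilde l_n(\mathbf{x}\pm c\eta\mathbf{1})$ by monotonicity, whereas the paper evaluates $\tilde F_n$, $\tilde F$ and $l$ at the random arguments and then uses that $l$ is $1$-Lipschitz for $\|\cdot\|_1$ (terms $\Lambda$, $\Xi$, $\Upsilon_1$, $\Upsilon_2$) — but these two devices are interchangeable and cost the same $O(d\eta)$.

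There is, however, one genuine gap, and it sits precisely at the step you identify as the crux: the containment $\tfrac{n}{k}U^j_{(\lfloor kT\rfloor)}\le 2T$, equivalently your condition $\eta\le T$. You propose to deduce it from the VC deviation bound itself, which requires $C\sqrt{(2T/k)(\log(d+3)+k)}\le T$, i.e.\ $T\ge 2C^2\bigl(\tfrac{\log(d+3)}{k}+1\bigr)$ where $C$ is the \emph{unspecified} absolute constant of Theorem~\ref{thm-princ} (it comes from the Rademacher complexity bound $\mathcal{R}_n\le C\sqrt{V_{\mathcal{A}}/n}$ and is not known to be below $\sqrt{7}/2$). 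The stated hypothesis $T\ge\tfrac{7}{2}(\tfrac{\log d}{k}+1)$ therefore does not ``exactly secure'' $\eta\le T$ as you assert; that implication is unproved. The paper avoids this circularity by establishing the containment with a separate, sharper tool: Wellner's ratio inequality gives $\mathbb{P}\bigl(\tfrac{n}{k}U^j_{(\lfloor kT\rfloor)}>2T\bigr)\le e^{-2kT/7}$ with an explicit constant, and the hypothesis on $T$ is calibrated so that $d\,e^{-2kT/7}\le e^{-k}\le\delta$ (Lemma~\ref{U-x}, inequality (\ref{eq-Wellner})). To close your argument you should either import such a ratio/binomial-tail inequality for this one step, or restate the hypothesis on $T$ with an unspecified absolute constant. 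Everything else in your outline (the sandwich on order statistics, the use of $l(\mathbf{1})\le d$, the union bound producing $\log\tfrac{d+3}{\delta}$, the reduction to a finite grid) is sound, modulo truncating $\mathbf{x}-c\eta\mathbf{1}$ at $\mathbf{0}$ for the lower half of the sandwich.
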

The second term on the right hand side of (\ref{thm:l:ineq}) is
a  bias term which depends on 
the  discrepancy between the left hand side and the limit in
  (\ref{stdf1}) or (\ref{stdf}) at level $t=k/n$. 
The value $k$ can be interpreted as the effective number of observations  used in the empirical estimate, \ie~the effective sample size for tail estimation. 
Considering classical inequalities in empirical process theory such as
VC-bounds, it is thus no surprise to obtain one  in $O(1/\sqrt k)$.
Too large values of $k$ tend to yield a large bias, whereas too small values of $k$ yield a large variance. For a more detailed discussion on the choice of $k$ we recommend \cite{ELL2009}.

The proof of Theorem~\ref{thm:l} follows the same lines as in \cite{Qi97}.
For  unidimensional random variables $Y_1,\ldots,Y_n$, let us denote
by $Y_{(1)} \le \ldots\le Y_{(n)}$ their order statistics. Define 
then the empirical version $\tilde F_n$ of $\tilde F$ ( introduced in
(\ref{stdf})) as 
\begin{align*}
 \tilde F_n(\mathbf{x})  ~=~ \frac{1}{n} \sum_{i=1}^n \mathds{1}_{\{ U_i^1 \le x_1 ~\text{or}~\ldots~\text{or}~ U_i^d \le x_d \}}~ ,
\end{align*}
so that 
$
  \frac{n}{k} \tilde F_n(\frac{k}{n}\mathbf{x}) ~=~ \frac{1}{k}
  \sum_{i=1}^n \mathds{1}_{\{ U_i^1 \le \frac{k}{n}x_1 ~\text{or}~\ldots~\text{or}~
    U_i^d \le \frac{k}{n}x_d
    \}}~
$. 
\noindent Notice that the $U_i^j$'s are  not observable (since $F_j$ is
unknown). In fact, $\tilde F_n$ will be used as a substitute for $l_n$
 allowing to handle uniform variables. The 
 following lemmas make this point explicit. 

\begin{lemma}[Link between $l_n$ and $\tilde F_n$]
\label{ln-Fn}
The  empirical version of $\tilde F$ and that of $l$ are related \emph{via}
\begin{align*}
l_n(\mathbf{x})~=~\frac{n}{k} \tilde F_n(U_{(\lfloor kx_1\rfloor)}^1,~\ldots~, U_{(\lfloor kx_d \rfloor)}^d).
\end{align*}
\end{lemma}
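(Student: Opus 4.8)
The plan is to establish the identity index-by-index and coordinate-by-coordinate, reducing it to a single monotonicity statement. Both $l_n(\mathbf{x})$, as defined in (\ref{ln}), and $\frac{n}{k}\tilde F_n$, evaluated at the empirical quantile vector, are of the form $\frac{1}{k}\sum_{i=1}^n \mathds{1}_{\{E_i\}}$, where each $E_i$ is a union over the $d$ coordinates of a one-dimensional event. Since the indicator of a union of events is determined by which individual events occur, it suffices to show that, for every coordinate $j$ and every index $i$, the one-dimensional event $\{X_i^j \ge X^j_{(n-\lfloor kx_j\rfloor+1)}\}$ entering (\ref{ln}) coincides with the event $\{U_i^j \le U^j_{(\lfloor kx_j\rfloor)}\}$ entering $\frac{n}{k}\tilde F_n(U^1_{(\lfloor kx_1\rfloor)},\ldots,U^d_{(\lfloor kx_d\rfloor)})$. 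Once this coordinate-wise equivalence is in hand, the two sums of indicators over $i$ are term-by-term equal and the normalization by $1/k$ yields the claim.

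First I would record how the two families of order statistics are linked. Writing $m = \lfloor kx_j\rfloor$ and recalling $U_i^j = 1 - F_j(X_i^j)$, the map $u \mapsto 1 - F_j(u)$ is non-increasing, so applying it to the ascending sample $X^j_{(1)} \le \ldots \le X^j_{(n)}$ reverses the order. Hence the $m$-th smallest value among $U_1^j,\ldots,U_n^j$ is the image of the $m$-th largest among $X_1^j,\ldots,X_n^j$, that is
\begin{align*}
U^j_{(m)} ~=~ 1 - F_j\big(X^j_{(n-m+1)}\big)~.
\end{align*}
Substituting this, the inequality $U_i^j \le U^j_{(m)}$ becomes $F_j(X_i^j) \ge F_j\big(X^j_{(n-m+1)}\big)$, which in turn is equivalent to $X_i^j \ge X^j_{(n-m+1)}$. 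This is precisely the coordinate-wise equivalence sought, and plugging it into the definitions closes the argument.

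The only delicate step is the last equivalence $F_j(X_i^j) \ge F_j\big(X^j_{(n-m+1)}\big) \Leftrightarrow X_i^j \ge X^j_{(n-m+1)}$: the forward implication is immediate from the monotonicity of $F_j$, whereas the converse requires that $F_j$ be increasing on the sample, or equivalently that ties be excluded. I would therefore invoke the continuity of the margins underlying the standardization $U^j = 1 - F_j(X^j)$ (implicit in the passage to uniform margins throughout Section~\ref{sec:stdf}), which ensures that $U^j$ is genuinely uniform and that, almost surely, no coordinate of the sample exhibits ties and no data point falls in a flat piece of $F_j$; under this condition the probability integral transform makes the equivalence exact, so the stated identity holds almost surely. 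This tie-handling, rather than any computation, is the one real subtlety; everything else is bookkeeping on the definition (\ref{ln}) and the display defining $\tilde F_n$ that precedes the lemma.
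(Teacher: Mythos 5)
Your proof is correct and follows essentially the same route as the paper's: both reduce the identity to the coordinate-wise equivalence of the events $\{X_i^j \ge X^j_{(n-\lfloor kx_j\rfloor+1)}\}$ and $\{U_i^j \le U^j_{(\lfloor kx_j\rfloor)}\}$ obtained by pushing the order statistics through the non-increasing map $u\mapsto 1-F_j(u)$, the paper phrasing this via ranks and you via the explicit identity $U^j_{(m)} = 1-F_j\big(X^j_{(n-m+1)}\big)$. Your explicit attention to ties and flat pieces of $F_j$ addresses a point the paper's rank argument silently assumes away (continuity of the margins is indeed implicit throughout, as it must be for the $U^j$ to be uniform).
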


\begin{proof}
Consider the definition of $l_n$ in (\ref{ln}), and note that for $j=1,\ldots,d$, 
\begin{align*}
 X_i^j \ge X_{(n-\lfloor  kx_i \rfloor +1)}^j &~\Leftrightarrow~ rank(X_i^j) \ge n-\lfloor  kx_j \rfloor+1 \\ &~\Leftrightarrow~  rank( F_j(X_i^j)) \ge n-\lfloor kx_j\rfloor+1 \\ &~\Leftrightarrow~  rank(1-F_j(X_i^j)) \le \lfloor kx_j\rfloor\\ &~\Leftrightarrow~  U_i^j \le U_{(\lfloor kx_j\rfloor)}^j,
\end{align*}
 so that 
$l_n(\mathbf{x})~=~\frac{1}{k}~\sum_{j=1}^n$ $\mathds{1}_{\{ U_j^1 \le U_{(\lfloor kx_1\rfloor)}^1 ~\text{or}~\ldots~\text{or}~ U_j^d \le U_{(\lfloor kx_d\rfloor)}^d  \}}$.
\end{proof}
~\\

\begin{lemma}[Uniform bound on $\tilde F_n$'s deviations]
\label{Fn-tildeF}
 For any finite  $T>0$, and $\delta\ge e^{-k}$,  with probability at least
$1-\delta$, the  deviation of $\tilde F_n$
from  $\tilde F$ is uniformly bounded: 
\begin{align*}
\sup_{0 \le \mathbf{x} \le T}  \left| \frac{n}{k} \tilde F_n(\frac{k}{n}\mathbf{x})-\frac{n}{k} \tilde F ( \frac{k}{n} \mathbf{x}) \right| \le Cd\sqrt{\frac{T}{k}\log{\frac{1}{\delta}}}
\end{align*}

\end{lemma}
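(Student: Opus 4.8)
The plan is to recognize that Lemma~\ref{Fn-tildeF} is nothing more than inequality~(\ref{Qialt2}) read through the identification $\mathbf{Z}=\mathbf{U}$, so that the entire statement follows from the VC machinery already assembled just before the lemma (Theorem~\ref{thm-princ} applied to the class $\mathcal{A}$ of VC-dimension $d$, together with the mass bound $p\le dT\frac{k}{n}$ on the union class). First I would take the auxiliary vector $\mathbf{Z}$ of the preceding paragraph to be the standardized vector $\mathbf{U}=(U^1,\ldots,U^d)$, with $U^j=1-F_j(X^j)$. Since the margins are continuous in the \textsc{stdf} framework, the probability integral transform makes each $U^j$ uniform on $[0,1]$, so $\mathbf{U}$ has precisely the uniform margins required of $\mathbf{Z}$, and the i.i.d.\ sample $\mathbf{X}_1,\ldots,\mathbf{X}_n$ furnishes an i.i.d.\ sample $\mathbf{U}_1,\ldots,\mathbf{U}_n$.

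Next I would rewrite the two empirical-process quantities as measures of sets in $\mathcal{A}$. Using the scaling identity $\frac{k}{n}[\mathbf{x},\infty[^{\,c}=[\frac{k}{n}\mathbf{x},\infty[^{\,c}$ and the defining expression $\tilde F_n(\mathbf{y})=\frac{1}{n}\sum_{i=1}^n\mathds{1}_{\{\mathbf{U}_i\in[\mathbf{y},\infty[^{\,c}\}}$, one reads off $C_n(\frac{k}{n}[\mathbf{x},\infty[^{\,c})=\tilde F_n(\frac{k}{n}\mathbf{x})$; likewise, by the identity $\tilde F(\mathbf{x})=\mathbb{P}(\mathbf{U}\in[\mathbf{x},\infty[^{\,c})$ recorded after~(\ref{stdf}), one has $C(\frac{k}{n}[\mathbf{x},\infty[^{\,c})=\mathbb{P}(\mathbf{U}\in[\frac{k}{n}\mathbf{x},\infty[^{\,c})=\tilde F(\frac{k}{n}\mathbf{x})$. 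With these two identifications the supremand of the lemma coincides term by term with that of~(\ref{Qialt2}), and the range $0\le\mathbf{x}\le T$ matches the parametrization of $\mathcal{A}$ exactly. The conclusion is then immediate from~(\ref{Qialt2}), whose hypotheses ($T>0$ finite and $\delta\ge e^{-k}$) are exactly those assumed here; recall that the role of $\delta\ge e^{-k}$ there was to absorb the residual $\frac{1}{k}\log\frac{1}{\delta}$ term into the dominant $\sqrt{\frac{T}{k}\log\frac{1}{\delta}}$ term.

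In this argument there is essentially no hard step: all the concentration work has been front-loaded into Theorem~\ref{thm-princ} and the derivation of~(\ref{Qialt2}), where the improvement over a naive VC bound came from exploiting the small mass $p\le dT\frac{k}{n}$. The only point that genuinely requires care is the uniform-margins identification, which rests on continuity of the $F_j$; I would state this explicitly, since it is what allows the unobservable $U^j$ to be treated as genuine uniform variables and thus places $\tilde F_n,\tilde F$ squarely within the scope of~(\ref{Qialt2}). Everything else is bookkeeping.
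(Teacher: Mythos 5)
Your proposal is correct and follows exactly the paper's own argument: the paper likewise identifies $\mathbf{Z}$ with $\mathbf{U}$, rewrites the supremand as the deviation of $C_n$ from $C$ on the sets $\frac{k}{n}[\mathbf{x},\infty[^{\,c}$, and applies inequality~(\ref{Qialt2}) directly. Your extra remark on continuity of the margins (needed for the $U^j$ to be genuinely uniform) is a fair point of care that the paper leaves implicit.
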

\begin{proof}
Notice that 
\[\sup_{0 \le \mathbf{x} \le T} \left| 
  \frac{n}{k} \tilde F_n(\frac{k}{n}\mathbf{x})- \frac{n}{k} \tilde F
  ( \frac{k}{n} \mathbf{x}) \right| = 
\frac{n}{k} \left|
 \frac{1}{n}  \sum_{i=1}^n \mathds{1}_{\{ \mb U_i \in \frac{k}{n}
   ]\mathbf{x},\infty]^c \}} -
   \mathbb{P} \left [\mathbf{U} \in \frac{k}{n} ]\mathbf{x},\infty]^c
   \right] \right|, \] and apply
inequality (\ref{Qialt2}).
\end{proof}

\begin{lemma}[Bound on the order statistics of $\mb U$]
\label{U-x} 
Let $\delta\ge e^{-k}$. For any finite positive number $T>0$ such that $T \ge 7/2((\log d)/k + 1)$, we have with probability greater than $1 - \delta$, 
\begin{align}
\label{eq-Wellner}
\forall~ 1\le j \le d,~~~~~\frac{n}{k} U_{(\lfloor kT\rfloor )}^j ~\le~ 2T~,
\end{align}
and with probability greater than $1- (d+1)\delta$, 
\begin{align*}
\max_{1 \le j \le d}~ \sup_{0 \le x_j \le T} \left| \frac{\lfloor kx_j\rfloor }{k} - \frac{n}{k} U_{(\lfloor kx_j\rfloor )}^j  \right| ~\le~ C\sqrt{\frac{T}{k}\log{\frac{1}{\delta}}}~.
\end{align*}
\end{lemma}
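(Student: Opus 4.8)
The plan is to reduce everything to a single coordinate. For fixed $j$, the variables $U_1^j,\ldots,U_n^j$ are \iid~uniform on $[0,1]$, so both assertions can be treated as one-dimensional statements and then recombined through union bounds. Throughout, let $\Gamma_n^j(t) = \frac1n\sum_{i=1}^n \mathds{1}_{U_i^j \le t}$ denote the empirical c.d.f.\ of the $j$-th margin, whose theoretical counterpart is simply $t \mapsto t$ on $[0,1]$. The two ingredients are (i) a binomial tail estimate controlling the largest relevant order statistic, and (ii) the one-dimensional version of inequality~(\ref{Qialt2}) applied to $\Gamma_n^j$, linked to the order statistics through the elementary identity $\Gamma_n^j(U_{(m)}^j) = m/n$ (valid almost surely, since ties have probability zero).

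For the first assertion (\ref{eq-Wellner}), fix $j$ and observe that
\[
\Big\{ \tfrac{n}{k} U_{(\lfloor kT\rfloor)}^j > 2T \Big\}
= \Big\{ U_{(\lfloor kT\rfloor)}^j > \tfrac{2kT}{n} \Big\}
= \Big\{ N_j < \lfloor kT\rfloor \Big\},
\]
where $N_j = \#\{ i : U_i^j \le 2kT/n \}$ follows a binomial law with parameters $(n, 2kT/n)$, hence with mean $2kT$. Thus I must bound the probability that $N_j$ falls below roughly half of its mean. A Chernoff-type lower-tail bound for the binomial yields $\mathbb{P}(N_j < \lfloor kT\rfloor) \le e^{-c\,kT}$ for an absolute constant $c>0$. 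Taking a union bound over the $d$ coordinates, the failure probability is at most $d\,e^{-c\,kT}$, and the hypotheses $T \ge \frac72(\tfrac{\log d}{k}+1)$ and $\delta \ge e^{-k}$ are exactly what is needed to force $d\,e^{-c\,kT} \le \delta$; this proves (\ref{eq-Wellner}) with probability at least $1-\delta$. The numerical constant $\frac72$ is a convenient sufficient value emerging from the exponent $c$ in the Chernoff estimate.

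For the second assertion, fix $j$ and set $m=\lfloor kx_j\rfloor$. Using $\Gamma_n^j(U_{(m)}^j)=m/n$,
\[
\left| \frac{\lfloor kx_j\rfloor}{k} - \frac{n}{k} U_{(\lfloor kx_j\rfloor)}^j \right|
= \frac{n}{k}\left| \Gamma_n^j\big(U_{(m)}^j\big) - U_{(m)}^j \right|,
\]
which is exactly the rescaled uniform empirical process evaluated at the point $t = U_{(m)}^j$. Applying inequality~(\ref{Qialt2}) in dimension one (the intervals $[0,\tfrac{k}{n}x]$ form a VC-class of dimension one) over the enlarged range $0 \le x \le 2T$ gives, with probability at least $1-\delta$,
\[
\sup_{0 \le x \le 2T} \frac{n}{k}\left| \Gamma_n^j\big(\tfrac{k}{n}x\big) - \tfrac{k}{n}x \right| \le C\sqrt{\frac{T}{k}\log\frac1\delta}.
\]
On the event of (\ref{eq-Wellner}) one has $\tfrac{n}{k}U_{(m)}^j \le \tfrac{n}{k}U_{(\lfloor kT\rfloor)}^j \le 2T$ for every $m \le \lfloor kT\rfloor$, so the point $t = U_{(m)}^j$ indeed lies in the window covered by the supremum above; the deviation bound therefore applies at $t = U_{(m)}^j$ for all $x_j \in [0,T]$ simultaneously. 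Combining the event (\ref{eq-Wellner}) (probability $\ge 1-\delta$) with the $d$ one-dimensional empirical-process events (each of probability $\ge 1-\delta$, union $\ge 1-d\delta$), a final union bound yields the claim with probability at least $1-(d+1)\delta$.

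The main obstacle is the first assertion, and specifically the role it plays for the second: everything must be confined to the region $\{U^j \le 2kT/n\}$, which has probability $O(k/n)$. This confinement is what allows the \emph{sharpened} VC bound (\ref{Qialt2}), whose effective sample size is $k$ rather than $n$, to be applied to a supremum over the bounded range $[0,2T]$; without the a priori control (\ref{eq-Wellner}) on the largest order statistic, the range of the empirical process would be uncontrolled and the low-probability-region estimate would not apply. The only delicate point is to match the Chernoff exponent against the union-bound factor $d$ and the constraint $\delta \ge e^{-k}$ so as to produce the stated threshold on $T$; the floor functions and the $O(1/k)$ discrepancy between strict and non-strict inequalities in the order-statistic identity are harmless and absorbed into the constant $C$.
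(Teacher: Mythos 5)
Your proof is correct and follows essentially the same route as the paper: control the top order statistic $\frac{n}{k}U_{(\lfloor kT\rfloor)}^j$ so that the random evaluation points stay in $[0,2T]$, use the identity $\Gamma_n^j(U_{(m)}^j)=m/n$ to rewrite the quantity as the rescaled uniform empirical process at those points, apply inequality (\ref{Qialt2}) coordinatewise, and conclude by union bounds giving $1-(d+1)\delta$. The only (harmless) difference is in the first assertion, where you replace the paper's appeal to Wellner's inequality on $\sup_t t/\Gamma_n(t)$ by a direct Chernoff lower-tail bound for $\mathrm{Bin}(n,2kT/n)$; the sharp form of that bound gives an exponent $2(1-\log 2)/2\cdot 2kT\approx 0.307\,kT \ge \tfrac{2}{7}kT$, so the stated threshold $T\ge\tfrac{7}{2}(\tfrac{\log d}{k}+1)$ is indeed sufficient.
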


\begin{proof}
  Notice that $\sup_{[0 , T]} \frac{n}{k} U_{(\lfloor k\point\rfloor )}^j =
  \frac{n}{k} U_{(\lfloor kT\rfloor )}^j $ and let $\Gamma_n(t) = \frac{1}{n}
  \sum_{i=1}^n \mathds{1}_{\{U_i^j \le t\}}$ . It then straightforward to see
  that 
\[ \frac{n}{k} U_{(\lfloor kT\rfloor )}^j \le 2T ~~\Leftrightarrow~~
  \Gamma_n\Big(\frac{k}{n} 2T\Big) \ge \frac{\lfloor kT\rfloor }{n} \]
 so that 
\[\mathbb{P}
  \left( \frac{n}{k} U_{(\lfloor kT\rfloor )}^j > 2T \right) ~\le~ \mathbb{P} \left
    ( \sup_{\frac{2kT}{n} \le t \le 1} \frac{t}{\Gamma_n(t)} > 2
  \right). \]
 Using \cite{Wellner78}, Lemma 1-(ii) (we use the fact that,   with
  the notations of this reference, $h(1/2) \ge 1/7$ ), we obtain 
\[\mathbb{P} \left( \frac{n}{k} U_{(\lfloor kT\rfloor )}^j > 2T \right) \le
  e^{-\frac{2kT}{7}}, \]
 and thus
 $$\mathbb{P} \left( \exists j,~
    \frac{n}{k} U_{(\lfloor kT\rfloor )}^j > 2T \right) \le de^{-\frac{2kT}{7}} \le
  e^{-k} \le \delta  
  $$ as required in (\ref{eq-Wellner}).  Yet,
\begin{align*}
\sup_{0 \le x_j \le T} \left| \frac{\lfloor kx_j\rfloor }{k} - \frac{n}{k} U_{(\lfloor kx_j\rfloor )}^j  \right| &~=~  \sup_{0 \le x_j \le T} \left| \frac{1}{k} \sum_{i=1}^n \mathds{1}_{\{ U_{i}^j \le U_{(\lfloor kx_j\rfloor )}^j   \}} - \frac{n}{k} U_{(\lfloor kx_j\rfloor )}^j  \right|\\
&~=~ \frac{n}{k} \sup_{0 \le x_j \le T} \left| \frac{1}{n} \sum_{i=1}^n \mathds{1}_{\{ U_{i}^j \le U_{(\lfloor kx_j\rfloor )}^j   \}} - \mathbb{P} \left [ U_1^j \le  U_{(\lfloor kx_j\rfloor )}^j \right] \right|\\
&~=~ \sup_{0 \le x_j \le T} \Theta_j (\frac{n}{k}U_{(\lfloor k x_j\rfloor )}^j ), 
\end{align*}
where $\Theta_j(y) = \frac{n}{k} 
\left| \frac{1}{n} \sum_{i=1}^n \mathds{1}_{\{ U_{i}^j \le \frac{k}{n}y   \}} - \mathbb{P} \left [ U_1^j \le \frac{k}{n} y \right] \right|$. 
Then, by (\ref{eq-Wellner}), 
with probability greater than $1-\delta$,
\begin{align*}
  \max_{1 \le j \le d} 
\sup_{0 \le x_j \le T} \left| \frac{\lfloor kx_j\rfloor }{k}
    - \frac{n}{k} U_{(\lfloor kx_j\rfloor )}^j \right|
~\le~
 \max_{1 \le j \le d} 
 \sup_{0 \le y \le 2T} \Theta_j(y)
\end{align*}
and from (\ref{Qialt2}), each term $\sup_{0 \le y \le 2T} \Theta_j(y)$
 is bounded by
$C\sqrt{\frac{T}{k}\log{\frac{1}{\delta}}}$ (with probability
$1-\delta$). In the end, 
 with probability greater than $1-(d+1) \delta$ :
\begin{align*}
\max_{1 \le j \le d} \sup_{0 \le y \le 2T} \Theta_j(y) ~\le~ C\sqrt{\frac{T}{k}\log{\frac{1}{\delta}}}~,
\end{align*}
which is  the desired inequality 
\end{proof}

 We may now proceed with the proof of Theorem \ref{thm:l}.
First of all, noticing that $\tilde F(t\mathbf{x})$ is non-decreasing in $x_j$ for every $l$ and that $l(\mathbf{x})$ is non-decreasing and continuous (thus uniformly continuous on $[0,T]^d$), from (\ref {stdf}) it is easy to prove by subdivising $[0,T]^d$ (see \cite{Qi97} p.174 for details) that 
\begin{align}
\label{unif_conv}
\sup_{0 \le \mathbf{x} \le T}\left| \frac{1}{t} \tilde F ( t \mathbf{x})-l(\mathbf{x}) \right|  \to 0 \text{~~~ as~~ t $\to$ 0 . }
\end{align}
\noindent
Using Lemma \ref{ln-Fn}, we can write :
\begin{align*}
\sup_{0 \le \mathbf{x} \le T} \left| l_n(\mathbf{x}) - l(\mathbf{x}) \right| &~=~ \sup_{0 \le \mathbf{x} \le T} \left| \frac{n}{k} \tilde F_n \left( U_{(\lfloor kx_1\rfloor )}^1,\ldots, U_{(\lfloor kx_d\rfloor )}^d \right) - l(\mathbf{x}) \right| \\
& ~\le~~~ \sup_{0 \le \mathbf{x} \le T} \left| \frac{n}{k} \tilde F_n \left(U_{(\lfloor kx_1\rfloor )}^1,\ldots, U_{(\lfloor kx_d\rfloor )}^d \right) - \frac{n}{k} \tilde F \left(U_{(\lfloor kx_1\rfloor )}^1,\ldots, U_{(\lfloor kx_d\rfloor )}^d \right)  \right| 
\\&~~~~~ + \sup_{0 \le \mathbf{x} \le T} \left| \frac{n}{k} \tilde F \left(U_{(\lfloor kx_1\rfloor )}^1,\ldots, U_{(\lfloor kx_d\rfloor )}^d \right) - l \left(\frac{n}{k} U_{(\lfloor kx_1\rfloor )}^1,\ldots, \frac{n}{k} U_{(\lfloor kx_d\rfloor )}^d \right) \right|
\\&~~~~~ + \sup_{0 \le \mathbf{x} \le T} \left| l \left(\frac{n}{k} U_{(\lfloor kx_1\rfloor )}^1, \ldots,\frac{n}{k} U_{(\lfloor kx_d\rfloor )}^d \right) - l(\mathbf{x}) \right|
\\&~=:~~~ \Lambda(n) ~~+~~ \Xi(n) ~~+~~ \Upsilon(n)~.
\end{align*}
\noindent
Now, by (\ref{eq-Wellner}) we have with probability greater than $1-\delta$ :
\begin{align*} 
\Lambda(n) ~\le~ \sup_{0 \le \mathbf{x} \le 2T}\left|\frac{n}{k} \tilde F_n(\frac{k}{n}\mathbf{x})-\frac{n}{k} \tilde F ( \frac{k}{n} \mathbf{x})\right|
\end{align*}
\noindent
and by Lemma \ref{Fn-tildeF}, 
\begin{align*}
 \Lambda(n) \le Cd \sqrt{\frac{2 T}{k}\log\frac{1}{\delta}}   
\end{align*}
\noindent
with probability at least $1-2\delta$. Similarly,
\begin{align*} 
\Xi(n) &~\le~  \sup_{0 \le \mathbf{x} \le  2 T}\left|\frac{n}{k}
  \tilde F(\frac{k}{n}\mathbf{x})-\frac{n}{k} l ( \frac{k}{n}
  \mathbf{x})\right| = 
 \sup_{0 \le \mathbf{x} \le  2 T} \left|\frac{n}{k}
  \tilde F(\frac{k}{n}\mathbf{x})- l (
  \mathbf{x})\right| ~\to~0 \quad\text{ (bias term)} 
\end{align*}
by virtue of (\ref{unif_conv}). Concerning $\Upsilon(n)$, we have :

\begin{align*}
 \Upsilon(n) &~\le~  \sup_{0 \le \mathbf{x} \le T} \left| l \left(\frac{n}{k} U_{(\lfloor kx_1\rfloor )}^1,\ldots, \frac{n}{k} U_{(\lfloor kx_d\rfloor )}^d \right) - l(\frac{\lfloor kx_1\rfloor }{k},\ldots,\frac{\lfloor kx_d\rfloor }{k}) \right| 
\\&~~~ ~+~  \sup_{0 \le \mathbf{x} \le T} \left| l(\frac{\lfloor kx_1\rfloor }{k},\ldots,\frac{\lfloor kx_d\rfloor }{k})-l(\mathbf{x}) \right| 
\\&~=~ \Upsilon_1(n) ~+~ \Upsilon_2(n)
\end{align*}
\noindent
Recall that $l$ is 1-Lipschitz on $[0,T]^d$ regarding to the $\|.\|_1$-norm, so that
\begin{align*}
\Upsilon_1(n) ~\le~ \sup_{0 \le \mathbf{x} \le T}  \sum_{l=1}^{d} \left| \frac{\lfloor kx_j\rfloor }{k} - \frac{n}{k} U_{(\lfloor kx_j\rfloor )}^j \right| 
\end{align*}
\noindent
so that by Lemma \ref{U-x}, with probability greater than $1-(d+1)\delta$:
\begin{align*}
\Upsilon_1(n) &~\le~  Cd \sqrt{\frac{2 T}{k}\log{\frac{1}{\delta}}}~.
\end{align*}
\noindent
On the other hand, $\Upsilon_2(n) ~\le~ \sup_{0 \le \mathbf{x} \le T} \sum_{l=1}^{d} \left|\frac{\lfloor k x_j\rfloor }{k} - x_j\right| ~\le~ \frac{d}{k}  $. 
Finally we get, for every $n >0$, with probability at least $1- (d+3)\delta$:
\begin{align*}
& \sup_{0 \le \mathbf{x} \le T} \left| l_n(\mathbf{x}) - l(\mathbf{x}) \right| ~\le~ \Lambda(n) + \Upsilon_1(n) + \Upsilon_2(n) + \Xi(n)
\\ &~~~~~~~~\le~  Cd\sqrt{\frac{2T}{k}\log\frac{1}{\delta}} ~+~ Cd\sqrt{\frac{2T}{k}\log\frac{1}{\delta}} ~+~ \frac{d}{k} ~+~\sup_{0 \le \mathbf{x} \le 2T}\left| \tilde F(\mathbf{x})-\frac{n}{k} l ( \frac{k}{n} \mathbf{x})\right|
\\ &~~~~~~~~\le~ C'd\sqrt{\frac{2T}{k}\log\frac{1}{\delta}} ~+~ \sup_{0 \le \mathbf{x} \le 2T}\left|\frac{n}{k} \tilde F(\frac{k}{n}\mathbf{x})- l ( \mathbf{x})\right|
\end{align*}

\section{Discussion}\label{sec:conclusion}

We provide a  non-asymptotic  bound  of VC type controlling 
the error 
of the 
empirical version of the \textsc{stdf}. 
Our bound
achieves the expected rate in $O(k^{-1/2}) + \text{bias}(k)$, where
$k$ is the number of (extreme) observations retained in the learning
process.  
In practice the smaller  $k/n$,  the smaller  the bias. Since no assumption is made on the underlying distribution, other than the existence of the
\textsc{stdf}, it is not possible in our framework to control the
bias explicitly. One option would be to make an additional hypothesis
of  `second order regular
variation'  \citep[see \emph{e.g.}][]{de1996second}. We made the
choice of making as 
few assumptions as possible, however, since the bias term is separated
from the `variance' term, it is probably feasible to refine our result
with more assumptions.

For the  purpose of controlling the empirical \textsc{stdf}, 
we have adopted the more general framework of maximal deviations in low
probability regions. The VC-type bounds adapted to  low probability regions derived in
Section~\ref{sec:concentration}    
may  directly be applied to a particular prediction context, namely where
the objective is to learn a classifier (or a regressor) that has good properties on
low probability regions.    
This may open the road to the study of classification of  extremal
observations, with immediate applications to the field of anomaly detection.





\bibliography{mvextrem}

\appendix

\section{Proof of Theorem \ref{thm-princ}}

Theorem~\ref{thm-princ} is actually  a short version of
Theorem~\ref{thm-princ-general} below:
\begin{theorem}[Maximal deviations]
\label{thm-princ-general}
Let $\mathbf{X}_1,\ldots,\mathbf{X}_n$ \iid~realizations of a \rv~$\mathbf{X}$ valued in $\mathbb{R}^d$, a VC-class $\mathcal{A}$, and denote by $\mathcal{R}_{n,p}$ the associated relative Rademacher average defined by 
\begin{align}
\label{def-extr-rad}
\mathcal{R}_{n,p} = \mathbb{E} \sup_{A \in \mathcal{A}} \frac{1}{np} \left | \sum_{i=1}^{n} \sigma_i \mathds{1}_{\mathbf{X}_i \in A}\right|~. 
\end{align}
Define the union $\mathbb{A} = \cup_{A \in \mathcal{A}} A$, and $ p = \mathbb{P}(\mathbf{X} \in \mathbb{A})$. Fix $0<\delta<1$, then with probability at least $1-\delta$,
\begin{align*}
\frac{1}{p} \sup_{A \in \mathcal{A}} \left | \mathbb{P}(\mathbf{X} \in A) - \frac{1}{n} \sum_{i=1}^n \mathds{1}_{\mathbf{X}_i \in A} \right| ~~\le~~ 2 \mathcal{R}_{n,p} ~+~ \frac{2}{3np} \log \frac{1}{\delta} ~+~ 2 \sqrt{\frac{1}{np} \log \frac{1}{\delta}}~,
\end{align*}
and there is a constant $C$ independent of $n,p,\delta$ such that with probability greater than $1- \delta$,
\begin{align*}
\sup_{A \in \mathcal{A}} \left | \mathbb{P}(\mathbf{X} \in A) - \frac{1}{n} \sum_{i=1}^n \mathds{1}_{\mathbf{X}_i \in A} \right| ~~\le~~ C \left( \sqrt{p} \sqrt{\frac{V_{\mathcal{A}}}{n} \log{\frac{1}{\delta}}} ~+~ \frac{1}{n} \log \frac{1}{\delta} \right)~.
\end{align*}
\noindent
If we assume furthermore that $\delta \ge e^{-np}$, then we both have:

\begin{align*}
&\frac{1}{p} \sup_{A \in \mathcal{A}} \left | \mathbb{P}(\mathbf{X} \in A) - \frac{1}{n} \sum_{i=1}^n \mathds{1}_{\mathbf{X}_i \in A} \right| ~~\le~~ 2 \mathcal{R}_{n,p}  ~+~ 3 \sqrt{\frac{1}{np} \log \frac{1}{\delta}}\\
&\frac{1}{p} \sup_{A \in \mathcal{A}} \left | \mathbb{P}(\mathbf{X} \in A) - \frac{1}{n} \sum_{i=1}^n \mathds{1}_{\mathbf{X}_i \in A} \right| ~~\le~~ C \sqrt{\frac{V_{\mathcal{A}}}{np} \log{\frac{1}{\delta}}}~.
\end{align*}
\end{theorem}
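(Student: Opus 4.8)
The plan is to follow the route sketched for Theorem~\ref{thm-princ}: first establish concentration of the functional $f(\mathbf{X}_{1:n})=\sup_{A\in\mathcal{A}}|\mathbb{P}(\mathbf{X}\in A)-\frac{1}{n}\sum_{i=1}^n\mathds{1}_{\mathbf{X}_i\in A}|$ around its mean, and then control that mean by the re-normalized Rademacher average $\mathcal{R}_{n,p}$. Since changing a single coordinate $\mathbf{X}_k$ perturbs $f$ by at most $1/n$, while the associated conditional second moments are governed by $q=\mathbb{E}(\sup_A|\mathds{1}_{\mathbf{X}'\in A}-\mathds{1}_{\mathbf{X}\in A}|)$, I would invoke the Bernstein-type bounded-differences inequality of \cite{McDiarmid98} to obtain (\ref{thm-princ:general}), namely $\mathbb{P}[f-\mathbb{E}f\ge t]\le\exp(-nt^2/(2q+2t/3))$. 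Inverting this tail in the usual Bernstein fashion gives, with probability at least $1-\delta$, the bound $f\le\mathbb{E}f+\sqrt{\frac{2q}{n}\log\frac{1}{\delta}}+\frac{2}{3n}\log\frac{1}{\delta}$.

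Second, I would bound the two remaining ingredients $q$ and $\mathbb{E}f$. For $q$, note that $|\mathds{1}_{\mathbf{X}'\in A}-\mathds{1}_{\mathbf{X}\in A}|\le\mathds{1}_{\mathbf{X}\in\mathbb{A}}+\mathds{1}_{\mathbf{X}'\in\mathbb{A}}$ since every $A\subseteq\mathbb{A}$, whence $q\le 2\mathbb{P}(\mathbf{X}\in\mathbb{A})=2p$. For $\mathbb{E}f$, a symmetrization step introducing an independent Rademacher sequence $(\sigma_i)$ yields $\mathbb{E}f\le 2\,\mathbb{E}\sup_A\frac{1}{n}|\sum_i\sigma_i\mathds{1}_{\mathbf{X}_i\in A}|=2p\,\mathcal{R}_{n,p}$. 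Substituting $q\le 2p$ and $\mathbb{E}f\le 2p\,\mathcal{R}_{n,p}$ into the inverted tail, dividing by $p$, and using $\sqrt{2q/n}\le 2\sqrt{p/n}$ produces exactly the first (relative) inequality of the theorem.

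Third, and this is where the real work lies, I must show $\mathcal{R}_{n,p}\le C\sqrt{V_{\mathcal{A}}/(np)}$, i.e.\ that the expected Rademacher supremum is of order $\sqrt{V_{\mathcal{A}}\,p/n}$ with the \emph{correct} $\sqrt{p}$ scaling and no spurious logarithmic inflation. I would condition on the sample and view the inner object as a Rademacher process indexed by $\mathcal{A}$, whose $L^2(\mathbb{P}_n)$-radius is $R=\sqrt{\sup_A\mathbb{P}_n(A)}$. Bounding the $L^2(\mathbb{P}_n)$-covering numbers of the VC class by Haussler's inequality ($\log N(\varepsilon)\le C V_{\mathcal{A}}\log(2/\varepsilon)$) and running Dudley's entropy integral only up to the empirical radius $R$ controls the conditional expectation in terms of $R$; taking expectations and using $\mathbb{E}\sup_A\mathbb{P}_n(A)\le\mathbb{E}\,\mathbb{P}_n(\mathbb{A})=p$ together with Jensen ($\mathbb{E}R\le\sqrt{p}$) converts this into the claimed rate. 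The \textbf{main obstacle} I anticipate is precisely here: a naive chaining leaves a $\sqrt{\log(1/p)}$ factor coming from the lower end of the entropy integral, so obtaining the clean $O(\sqrt{V_{\mathcal{A}}/(np)})$ requires exploiting the localization afforded by the small radius $R\approx\sqrt{p}$ carefully, rather than integrating down to $0$ crudely.

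Finally, the VC-form (absolute) bound follows by feeding $\mathcal{R}_{n,p}\le C\sqrt{V_{\mathcal{A}}/(np)}$ back into the relative inequality and multiplying through by $p$, which isolates the $\sqrt{p}\,\sqrt{\frac{V_{\mathcal{A}}}{n}\log\frac{1}{\delta}}$ term and the residual $\frac{1}{n}\log\frac{1}{\delta}$ term. The simplified statements under $\delta\ge e^{-np}$ are then immediate: this hypothesis reads $\frac{1}{np}\log\frac{1}{\delta}\le 1$, so the linear term $\frac{1}{np}\log\frac{1}{\delta}$ is dominated by the square-root term $\sqrt{\frac{1}{np}\log\frac{1}{\delta}}$, and the two may be merged into a single contribution of order $\sqrt{\frac{1}{np}\log\frac{1}{\delta}}$ (resp.\ $\sqrt{\frac{V_{\mathcal{A}}}{np}\log\frac{1}{\delta}}$) at the cost of enlarging the absolute constant.
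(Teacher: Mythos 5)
Your overall architecture coincides with the paper's: a Bernstein-type bounded-differences inequality applied to $f(\mathbf{X}_{1:n})$ with $\text{maxdev}^+\le 1/n$ and variance controlled by $q$, the bound $q\le 2p$, symmetrization to get $\mathbb{E}f\le 2\mathcal{R}_n=2p\,\mathcal{R}_{n,p}$, and inversion of the tail. All of that is fine and matches the appendix. The genuine gap is exactly where you locate it, in the third step: your plan for proving $\mathcal{R}_{n,p}\le C\sqrt{V_{\mathcal{A}}/(np)}$ via a Dudley entropy integral truncated at the empirical radius $R=\sqrt{\sup_A\mathbb{P}_n(A)}$ does not deliver the stated bound. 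With Haussler's covering estimate, $\int_0^R\sqrt{V_{\mathcal{A}}\log(2/\varepsilon)}\,\ud\varepsilon$ is of order $R\sqrt{V_{\mathcal{A}}\log(2/R)}$, so after taking expectations you obtain $C\sqrt{V_{\mathcal{A}}\,p\log(1/p)/n}$, i.e.\ a residual $\sqrt{\log(1/p)}$ that is not an artifact of "integrating down to $0$ crudely" but is intrinsic to radius-localized chaining for VC classes (this is the known form of localized Rademacher complexities for such classes). Hoping to remove it "by exploiting the localization carefully" is not a proof; as stated, your argument only yields the theorem with an extra $\sqrt{\log(1/p)}$ factor, which is precisely the improvement the paper is after.

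The paper's Lemma~\ref{lem-relative-rademacher} circumvents this by exploiting a stronger structural fact than the small $L^2$ radius: every $A\in\mathcal{A}$ is contained in the single set $\mathbb{A}$ of mass $p$, so points falling outside $\mathbb{A}$ contribute nothing to any $\mathds{1}_{\mathbf{X}_i\in A}$. Writing $\sum_{i=1}^n\sigma_i\mathds{1}_{\mathbf{X}_i\in A}\overset{d}{=}\sum_{i=1}^{\kappa}\sigma_i\mathds{1}_{\mathbf{Y}_i\in A}$ with $\kappa\sim \mathrm{Bin}(n,p)$ and $\mathbf{Y}_i$ drawn from the law of $\mathbf{X}$ conditioned on $\{\mathbf{X}\in\mathbb{A}\}$, one conditions on $\kappa$ and applies the \emph{global}, log-free VC bound $\mathcal{R}_K\le C\sqrt{V_{\mathcal{A}}/K}$ to the subsample of size $\kappa$, getting $\mathcal{R}_{n,p}\le \frac{C\sqrt{V_{\mathcal{A}}}}{np}\,\mathbb{E}\sqrt{\kappa}\le C\sqrt{V_{\mathcal{A}}/(np)}$ by Jensen. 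The effective sample size $np$ replaces $n$ without any logarithmic inflation because the class restricted to $\mathbb{A}$ is no longer "small" relative to the conditional measure. You should replace your chaining step by this subsampling identity (or an equivalent conditioning argument); the remaining steps of your proposal, including the treatment of the case $\delta\ge e^{-np}$, then go through as you describe.
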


In the following, $\mb X_{1:n}$ denotes an \iid~sample $(\mb
X_1,\ldots,\mb X_n)$ distributed as  $\mb X$,  a $\bb R^d$-valued random vector. The classical steps to prove VC inequalities consist in applying a
concentration inequality to the function
\begin{equation}
  \label{eq:fsupdev}
  f(\mathbf{X}_{1:n}):= \sup_{A \in \mathcal{A}}
\left | \mathbb{P}(\mathbf{X} \in A) - \frac{1}{n} \sum_{i=1}^n
  \mathds{1}_{\mathbf{X}_i \in A} \right|, 
\end{equation}
 and then  establishing  bounds
on the expectation $\mathbb{E}f(\mathbf{X}_{1:n})$, using for instance
Rademacher average. Here we follow the same lines, but applying a
Bernstein type concentration inequality instead of the usual Hoeffding
one, since the variance term in the bound involves the probability $p$
to be in the union of the VC-class $\mathcal{A}$ considered. We then
introduce relative Rademacher averages instead of the conventional
ones, to take into account  $p$ for bounding  $\mathbb{E}f(\mathbf{X}_{1:n})$. 

We need first to  control the variability of the random variable $f(\mathbf{X}_{1:n})$ when fixing all but one marginal $\mathbf{X}_i$. For that purpose introduce the functional
\begin{align*}
h(\mathbf{x}_1,\ldots,\mathbf{x}_k) = \mathbb{E}\left [ f(\mb X_{1:n}) | \mathbf{X}_1=\mathbf{x}_1,\ldots,\mathbf{X}_k=\mathbf{x}_k \right] - \mathbb{E}\left [ f(\mb X_{1:n}) | \mathbf{X}_1=\mathbf{x}_1,\ldots,\mathbf{X}_{k-1}=\mathbf{x}_{k-1} \right]
\end{align*}
The \emph{positive deviation} of
$h(\mathbf{x}_1,\ldots,\mathbf{x}_{k-1},\mathbf{X}_k)$ is defined by 
\[dev^+(\mathbf{x}_1,\ldots,\mathbf{x}_{k-1})= \sup_{\mathbf{x} \in
  \mathbb{R}^d} \left \{
  h(\mathbf{x}_1,\ldots,\mathbf{x}_{k-1},\mathbf{x})\right \}, \]
 and $\text{maxdev}^+$, the maximum of all positive deviations, by 
 \[\text{maxdev}^+ = \sup_{\mathbf{x}_1,\ldots,\mathbf{x}_{k-1}} \max_{k}\;
 dev^+(\mathbf{x}_1,\ldots,\mathbf{x}_{k-1})~.\] 
 Finally,  define $\hat v $, the \emph{maximum sum of
 variances}, by $$\hat v = \sup_{\mathbf{x}_1,\ldots,\mathbf{x}_n}
 \sum_{k=1}^{n} \Var~ h(\mathbf{x}_1,\ldots,\mathbf{x}_{k-1}, \mb X_k)~.$$ We have
 now the tools to state an extension of the classical Bernstein
 inequality, which is proved in \cite{McDiarmid98}.
\begin{proposition}
\label{thm-berstein}
Let $\mathbf{X}_{1:n} = (\mathbf{X}_1,\ldots,\mathbf{X}_n)$ 
as above, and $f$ any function $(\bb R^d)^n\to \bb R$~.  Let $\text{maxdev}^+$
and $\hat v$ the maximum sum of variances, both of which we assume to
be finite, and let $\mu$ be the mean of $f(\mb X_{1:n})$. Then for any $t \ge 0$,
\begin{align*}
\mathbb{P} \big[ f(\mb X_{1:n}) - \mu \ge t \big] ~\le~ \exp{\left(- \frac{t^2}{2 \hat v (1 + \frac{\text{maxdev}^+ t}{3 \hat v}) }\right)}~.
\end{align*}
\end{proposition}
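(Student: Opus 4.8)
The plan is to establish this Bernstein-type bound along the classical martingale route: first form the Doob decomposition of $f(\mathbf{X}_{1:n})-\mu$ along the natural filtration, then run a Chernoff/Freedman argument on the resulting martingale whose increments are bounded above and whose conditional variances sum to at most $\hat v$. Concretely, let $\mathcal{F}_k = \sigma(\mathbf{X}_1,\ldots,\mathbf{X}_k)$ with $\mathcal{F}_0$ trivial, and set $D_k = \mathbb{E}[f(\mathbf{X}_{1:n})\mid\mathcal{F}_k] - \mathbb{E}[f(\mathbf{X}_{1:n})\mid\mathcal{F}_{k-1}]$. Since the $\mathbf{X}_i$ are independent, $D_k$ agrees almost surely with $h(\mathbf{X}_1,\ldots,\mathbf{X}_k)$ as defined above, whence $f(\mathbf{X}_{1:n})-\mu = \sum_{k=1}^n D_k$ with $\mathbb{E}[D_k\mid\mathcal{F}_{k-1}]=0$. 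The two hypotheses translate directly into increment controls: the positive-deviation bound gives the one-sided inequality $D_k \le dev^+(\mathbf{X}_1,\ldots,\mathbf{X}_{k-1}) \le \text{maxdev}^+$ almost surely, while $\mathbb{E}[D_k^2\mid\mathcal{F}_{k-1}] = \Var\, h(\mathbf{X}_1,\ldots,\mathbf{X}_{k-1},\mathbf{X}_k)$ has sum over $k$ dominated by the deterministic quantity $\hat v$.

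Writing $b := \text{maxdev}^+$ and fixing $\lambda>0$, I would bound each conditional moment generating function by Bennett's lemma, $\mathbb{E}[e^{\lambda D_k}\mid\mathcal{F}_{k-1}] \le \exp\!\big(g(\lambda)\,\mathbb{E}[D_k^2\mid\mathcal{F}_{k-1}]\big)$ with $g(\lambda) = b^{-2}(e^{\lambda b}-1-\lambda b) \ge 0$. Forming the process $W_k := \exp\!\big(\lambda\sum_{j\le k}D_j - g(\lambda)\sum_{j\le k}\mathbb{E}[D_j^2\mid\mathcal{F}_{j-1}]\big)$ and checking $\mathbb{E}[W_k\mid\mathcal{F}_{k-1}]\le W_{k-1}$ exhibits an exponential supermartingale, so that $\mathbb{E}\,W_n \le 1$; combining this with the sure bound $\sum_k \mathbb{E}[D_k^2\mid\mathcal{F}_{k-1}] \le \hat v$ yields $\mathbb{E}\,e^{\lambda(f-\mu)} \le e^{g(\lambda)\hat v}$. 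Markov's inequality and optimisation over $\lambda$ then produce the Bennett bound $\exp\!\big(-\hat v\, b^{-2}\,\varphi(bt/\hat v)\big)$ with $\varphi(u)=(1+u)\log(1+u)-u$, which relaxes to the announced form $\exp\big(-t^2/[2\hat v(1+bt/(3\hat v))]\big)$ via the elementary inequality $\varphi(u)\ge u^2/(2(1+u/3))$.

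The step I expect to be most delicate is the passage from the \emph{random} conditional-variance sum $\sum_k \mathbb{E}[D_k^2\mid\mathcal{F}_{k-1}]$ to its deterministic majorant $\hat v$: one cannot substitute $\hat v$ inside a conditional expectation, and a naive product of the per-step MGF bounds does not close. This is exactly why the supermartingale $W_k$, which carries the running variance in its exponent, is the right object, and why $g(\lambda)\ge 0$ is needed so that the sure bound can be invoked only at the final stage. A secondary point requiring care is that the upper tail is controlled by the one-sided quantity $dev^+$ rather than a two-sided increment bound, which is precisely what the asymmetric definitions of $dev^+$ and $\hat v$ supply. As this is the inequality proved in \cite{McDiarmid98}, I would ultimately cite that reference for the supermartingale bookkeeping rather than reproduce every detail.
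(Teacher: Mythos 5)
Your proof is correct and is essentially the argument the paper relies on: the paper does not prove Proposition~\ref{thm-princ} of this type itself but defers entirely to \cite{McDiarmid98}, whose proof is precisely this martingale route (Doob increments $D_k=h(\mathbf{X}_1,\ldots,\mathbf{X}_k)$, one-sided bound by $\text{maxdev}^+$, conditional-variance sum surely bounded by $\hat v$, Bennett's lemma on the conditional MGF, and the relaxation $\varphi(u)\ge u^2/(2(1+u/3))$ to reach the Bernstein form). Your handling of the random conditional-variance sum via the exponential supermartingale, invoking the sure bound by $\hat v$ only at the last step where $g(\lambda)\ge 0$ is used, is exactly the delicate point McDiarmid's argument addresses, so nothing is missing.
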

\noindent
Note that the term $\frac{\text{maxdev}^+ t}{3 \hat v}$ is view as an `error
term' and is often negligible. Let us apply this theorem to the
specific  function $f$ defined in (\ref{eq:fsupdev}).  
Then the following lemma holds true:

\begin{lemma}
\label{lem-1}
In the situation of Proposition \ref{thm-berstein} with $f$ as in
(\ref{eq:fsupdev}),  we have
$$\text{maxdev}^+ \le \frac{1}{n} \text{~~and~~} \hat v \le \frac{q}{n}, $$ where 
\begin{align}
\label{lem-1:q}
q ~=~ \mathbb{E}\left ( \sup_{A \in \mathcal{A}} \left |
    \mathds{1}_{\mathbf{X}' \in A} - \mathds{1}_{\mathbf{X} \in A}
  \right|\right) ~\le~ 2 \mathbb{E}\left ( \sup_{A \in \mathcal{A}}
  \left | \mathds{1}_{\mathbf{X}' \in A}  \mathds{1}_{\mathbf{X}
      \notin A} \right|\right), 
\end{align}
with $\mathbf{X}'$ an independent copy of $\mathbf{X}$.

\end{lemma}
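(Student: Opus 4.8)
The plan is to treat $h$ as the $k$-th martingale difference in the Doob decomposition of $f(\mathbf{X}_{1:n})$ and to control both its range (for $\text{maxdev}^+$) and its conditional second moment (for $\hat v$) through one refined bounded-differences estimate on $f$. Writing $g_k(\mathbf{x}_1,\ldots,\mathbf{x}_k) = \mathbb{E}[f(\mathbf{X}_{1:n}) \mid \mathbf{X}_1 = \mathbf{x}_1,\ldots,\mathbf{X}_k = \mathbf{x}_k]$ for the partial conditional expectation obtained by averaging over $\mathbf{X}_{k+1},\ldots,\mathbf{X}_n$, I would first record the identity
\begin{align*}
h(\mathbf{x}_1,\ldots,\mathbf{x}_k) = \mathbb{E}_{\mathbf{X}'}\big[ g_k(\mathbf{x}_1,\ldots,\mathbf{x}_{k-1},\mathbf{x}_k) - g_k(\mathbf{x}_1,\ldots,\mathbf{x}_{k-1},\mathbf{X}') \big],
\end{align*}
where $\mathbf{X}'$ is an independent copy of $\mathbf{X}$. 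This presents $h$ as an average of increments of $g_k$ in which only the $k$-th argument is perturbed, and shows at once that $\mathbb{E}[h(\mathbf{x}_1,\ldots,\mathbf{x}_{k-1},\mathbf{X}_k)] = 0$, which I will use to identify the variance with a second moment.

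The crux is the refined bounded-differences estimate for $f$ itself. If two configurations differ only in the $k$-th coordinate, say $\mathbf{y}$ versus $\mathbf{y}'$, then selecting a near-maximizing set $A^\star$ for the larger of the two suprema and comparing the corresponding summands $|\mathbb{P}(\mathbf{X}\in A^\star)-\frac1n\sum_i\mathds{1}_{\mathbf{x}_i\in A^\star}|$ gives
\begin{align*}
\big| f(\ldots,\mathbf{y},\ldots) - f(\ldots,\mathbf{y}',\ldots) \big| \le \frac{1}{n}\sup_{A \in \mathcal{A}} \big| \mathds{1}_{\mathbf{y} \in A} - \mathds{1}_{\mathbf{y}' \in A} \big|.
\end{align*}
Crucially I would \emph{not} bound the right-hand supremum by $1$; retaining it is exactly what later produces the dependence on the class complexity $q$. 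Since this estimate is uniform in the remaining averaged coordinates, it passes through $\mathbb{E}_{\mathbf{X}_{k+1:n}}$ and yields the same bound for the increments of $g_k$. The bound on $\text{maxdev}^+$ is then immediate: each increment, hence $h$ as their average, is at most $1/n$ in absolute value, so $\text{maxdev}^+ \le 1/n$.

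For $\hat v$ I would write $\Var\, h(\mathbf{x}_1,\ldots,\mathbf{x}_{k-1},\mathbf{X}_k) = \mathbb{E}[h^2]$ using mean-zeroness, apply Jensen to push the square inside the expectation over $\mathbf{X}'$, and then invoke the refined estimate. The linchpin is that $\sup_{A}|\mathds{1}_{\mathbf{y}\in A}-\mathds{1}_{\mathbf{y}'\in A}|$ is $\{0,1\}$-valued and therefore equals its own square; hence the per-coordinate variance is at most $\frac{1}{n^2}\,\mathbb{E}\big[\sup_A |\mathds{1}_{\mathbf{X}_k \in A}-\mathds{1}_{\mathbf{X}' \in A}|\big] = q/n^2$, and summing over the $n$ coordinates gives $\hat v \le q/n$. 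Finally, the inequality $q \le 2\,\mathbb{E}[\sup_A \mathds{1}_{\mathbf{X}' \in A}\mathds{1}_{\mathbf{X} \notin A}]$ follows by writing the symmetric-difference indicator $|\mathds{1}_{\mathbf{X}'\in A}-\mathds{1}_{\mathbf{X}\in A}| = \mathds{1}_{\mathbf{X}'\in A}\mathds{1}_{\mathbf{X}\notin A} + \mathds{1}_{\mathbf{X}'\notin A}\mathds{1}_{\mathbf{X}\in A}$, bounding the supremum of the sum by the sum of the suprema, and equating the two resulting expectations by exchangeability of $\mathbf{X}$ and $\mathbf{X}'$. The main obstacle is not any single computation but the decision to keep the indicator supremum in the bounded-differences step together with the elementary remark that a $\{0,1\}$-valued quantity is its own square; these two points are precisely what upgrade a crude $\hat v \le 1/n$ into the sharp $\hat v \le q/n$ that ultimately delivers the $\sqrt p$ factor in Theorem~\ref{thm-princ}.
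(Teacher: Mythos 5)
Your proposal is correct and follows essentially the same route as the paper's proof: the bound $|h|\le \frac{1}{n}\,\mathbb{E}\sup_{A}|\mathds{1}_{\mathbf{x}_k\in A}-\mathds{1}_{\mathbf{X}_k\in A}|$ obtained from the elementary inequality $|\sup_A|F(A)|-\sup_A|G(A)||\le\sup_A|F(A)-G(A)|$, followed by Jensen and the observation that the $\{0,1\}$-valued supremum equals its own square, is exactly the paper's argument, merely phrased through the intermediate functions $g_k$. Your explicit derivation of $q\le 2\,\mathbb{E}[\sup_A \mathds{1}_{\mathbf{X}'\in A}\mathds{1}_{\mathbf{X}\notin A}]$ via the symmetric-difference decomposition and exchangeability is a small addition the paper leaves implicit.
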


\noindent
\begin{proof}
Considering the definition of $f$, we have:
\begin{align*}
h(\mathbf{x}_1,\ldots,\mathbf{x}_{k-1},\mathbf{x}_k) &= ~\mathbb{E} \sup_{A \in \mathcal{A} } \left | \mathbb{P}(\mathbf{X} \in A) - \frac{1}{n} \sum_{i=1}^k \mathds{1}_{\mathbf{x}_i \in A} - \frac{1}{n} \sum_{i=k+1}^n \mathds{1}_{\mathbf{X}_i \in A}  \right|  \\ 
&~~~~~~~~~~~~~~~~~~~~~~~~~~~~~~~~~~-~ \mathbb{E} \sup_{A \in \mathcal{A} } \left | \mathbb{P}(\mathbf{X} \in A) - \frac{1}{n} \sum_{i=1}^{k-1} \mathds{1}_{\mathbf{x}_i \in A} - \frac{1}{n} \sum_{i=k}^n \mathds{1}_{\mathbf{X}_i \in A}  \right|~. 
\end{align*}
Using the fact that $\big | \sup_{A \in \mathcal{A}}|F(A)| - \sup_{A \in \mathcal{A}}|G(A)| \big| \le \sup_{A \in \mathcal{A}} |F(A) - G(A)|$ for every function $F$ and $G$ of $A$, we obtain:
\begin{align}
\label{g1}
\big| h(\mathbf{x}_1,\ldots,\mathbf{x}_{k-1},\mathbf{x}_k) \big| ~\le~ \mathbb{E}  \sup_{A \in \mathcal{A}} \frac{1}{n} \left | \mathds{1}_{\mathbf{x}_k \in A} - \mathds{1}_{\mathbf{X}_k \in A}  \right|~.
\end{align}
\noindent
The term on the right hand side of (\ref{g1}) is less than $\frac{1}{n}$ so that $\text{maxdev}^+ \le \frac{1}{n}$. Moreover, if $\mathbf{X}'$ is an independent copy of $\mathbf{X}$, (\ref{g1}) yields  
\begin{align*}
\big| h(\mathbf{x}_1,\ldots,\mathbf{x}_{k-1},\mathbf{X}') \big| ~\le~ \mathbb{E} \left [  \sup_{A \in \mathcal{A}} \frac{1}{n} \left | \mathds{1}_{\mathbf{X}' \in A} - \mathds{1}_{\mathbf{X} \in A}  \right| ~\Big|~ \mathbf{X}' \right],
\end{align*}

\noindent
so that 
\begin{align*}
\mathbb{E} \left [ h(\mathbf{x}_1,\ldots,\mathbf{x}_{k-1},\mathbf{X}')^2\right] &~\le~ \mathbb{E}~ \mathbb{E} \left [  \sup_{A \in \mathcal{A}} \frac{1}{n} \left | \mathds{1}_{\mathbf{X}' \in A} - \mathds{1}_{\mathbf{X} \in A}  \right| ~\Big|~ \mathbf{X}' \right]^2 \\
&~\le~ \mathbb{E} \left [  \sup_{A \in \mathcal{A}} \frac{1}{n^2} \left | \mathds{1}_{\mathbf{X}' \in A} - \mathds{1}_{\mathbf{X} \in A}  \right|^2 \right] \\
&~\le~ \frac{1}{n^2} \mathbb{E} \left [  \sup_{A \in \mathcal{A}}  \left | \mathds{1}_{\mathbf{X}' \in A} - \mathds{1}_{\mathbf{X} \in A}  \right|  \right] 
\end{align*}
\noindent
Thus $\Var(h(\mathbf{x}_1,\ldots,\mathbf{x}_{k-1},\mathbf{X}_k)) \le \mathbb{E} [h(\mathbf{x}_1,\ldots,\mathbf{x}_{k-1},\mathbf{X}_k)^2]$ $ \le \frac{q}{n^2}$. Finally $\hat v \le \frac{q}{n}$ as required.
\end{proof}


\noindent
As a consequence with Proposition \ref{thm-berstein} the following general inequality holds true:
\begin{align}
\label{concentration-general}
\mathbb{P}\left [ f(\mb X_{1:n}) - \mathbb{E} f(\mb X_{1:n}) ~\ge~ t \right] ~\le~ e^{-\frac{n t^2}{2q + \frac{2t}{3}} }
\end{align}
\noindent
where the quantity $q~=~ \mathbb{E}\left ( \sup_{A \in \mathcal{A}} \left | \mathds{1}_{\mathbf{X}' \in A} - \mathds{1}_{\mathbf{X} \in A} \right|\right)$ 
seems to be a central characteristic of the VC-class $\mathcal{A}$ given the distribution $\mathbf{X}$. It may be interpreted as a measure of the complexity of the class $\mathcal{A}$ with respect to the distribution of $\mathbf{X}$: how often the class $\mathcal{A}$ is able to separate two independent realizations of $\mathbf{X}$. 

Recall that the union class $\mathbb{A}$ and its associated probability $p$ are defined as $\mathbb{A} = \cup_{A \in \mathcal{A}} A$, and $p = \mathbb{P}(\mathbf{X} \in \mathbb{A})$. Noting that for all $A \in \mathcal{A}$, $\mathds{1}_{\{ . \in A\}} \le \mathds{1}_{\{ . \in \mathbb{A}\}}$, it is then straightforward from (\ref{lem-1:q}) that $q \le 2p$. As a consequence (\ref{concentration-general})  holds true when changing $q$ by $2p$.
Let us now explicit the link between the expectation of $f$ and the Rademacher average $$\mathcal{R}_n = \mathbb{E} \sup_{A \in \mathcal{A}} \frac{1}{n} \left | \sum_{i=1}^{n} \sigma_i \mathds{1}_{\mathbf{X}_i \in A}\right|~,$$ where $(\sigma_i)_{i \ge 1}$ is a Rademacher chaos independent of the $\mathbf{X}_i$'s.
\begin{lemma}
\label{lem-rademacher}
 With this notations the following inequality holds true:
$$ \mathbb{E}f(\mb X_{1:n}) ~\le~ 2 \mathcal{R}_n$$
\end{lemma}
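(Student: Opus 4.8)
The plan is to use the classical \emph{symmetrization} trick from empirical process theory. First I would introduce a ghost sample $\mathbf{X}_1',\ldots,\mathbf{X}_n'$, an \iid~copy of $\mathbf{X}_{1:n}$ that is independent of the original data. The starting observation is that, for each fixed $A$, the theoretical probability can be rewritten as an expectation over the ghost sample: $\mathbb{P}(\mathbf{X}\in A) = \mathbb{E}'\big[\frac{1}{n}\sum_{i=1}^n \mathds{1}_{\mathbf{X}_i'\in A}\big]$, where $\mathbb{E}'$ denotes expectation with respect to the $\mathbf{X}_i'$ only. Substituting this into the definition (\ref{eq:fsupdev}) of $f$ turns the deviation of the empirical measure from its mean into the deviation between two empirical measures.

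Next I would pull the ghost expectation outside the supremum and the absolute value. Since the map $g \mapsto \sup_{A}|g(A)|$ is convex, Jensen's inequality gives
\[ f(\mathbf{X}_{1:n}) = \sup_{A\in\mathcal{A}} \Big| \mathbb{E}'\Big[\tfrac{1}{n}\sum_{i=1}^n(\mathds{1}_{\mathbf{X}_i'\in A}-\mathds{1}_{\mathbf{X}_i\in A})\Big]\Big| \le \mathbb{E}'\sup_{A\in\mathcal{A}}\Big|\tfrac{1}{n}\sum_{i=1}^n(\mathds{1}_{\mathbf{X}_i'\in A}-\mathds{1}_{\mathbf{X}_i\in A})\Big|. \]
Taking expectation over $\mathbf{X}_{1:n}$ and invoking Fubini's theorem then bounds $\mathbb{E}f(\mathbf{X}_{1:n})$ by the expectation, over both samples, of the same symmetrized supremum.

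The key step is to insert Rademacher signs. Because $\mathbf{X}_i$ and $\mathbf{X}_i'$ are \iid, each increment $\mathds{1}_{\mathbf{X}_i'\in A}-\mathds{1}_{\mathbf{X}_i\in A}$ is symmetric, so exchanging the roles of $\mathbf{X}_i$ and $\mathbf{X}_i'$ leaves the joint law unchanged; multiplying the $i$-th increment by an independent Rademacher sign $\sigma_i\in\{-1,+1\}$ therefore does not modify the expectation of the supremum. This yields
\[ \mathbb{E}f(\mathbf{X}_{1:n}) \le \mathbb{E}\sup_{A\in\mathcal{A}}\Big|\tfrac{1}{n}\sum_{i=1}^n \sigma_i(\mathds{1}_{\mathbf{X}_i'\in A}-\mathds{1}_{\mathbf{X}_i\in A})\Big|. \]
Finally I would split this with the triangle inequality into a $\mathbf{X}'$-term and a $\mathbf{X}$-term, and observe that both have the same law as $\mathcal{R}_n$ (the ghost term by the equality in distribution of the two samples), which gives the announced bound $2\mathcal{R}_n$.

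The main obstacle is justifying the sign-insertion step rigorously: one must argue, conditionally on the signs, that for any fixed sign pattern the corresponding expected symmetrized supremum does not depend on the pattern — each flip $\sigma_i=-1$ amounts to relabelling an identically distributed pair $(\mathbf{X}_i,\mathbf{X}_i')$ — and then average over $\sigma$. Everything else, namely the ghost-sample substitution, the convexity/Jensen step, Fubini, and the final triangle inequality, is routine.
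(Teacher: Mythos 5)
Your proposal is correct and follows essentially the same route as the paper's proof: ghost sample, Jensen's inequality to pull the ghost expectation outside the supremum, insertion of Rademacher signs by the symmetry of the increments, and a final triangle inequality splitting into two terms each equal in law to $\mathcal{R}_n$. Nothing is missing.
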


\begin{proof}
The proof of this lemma relies on classical arguments: Introducing a ghost sample $(\mathbf{X}_i^{'})_{1 \le i \le n}$ namely i.i.d independent copy of the $\mathbf{X}_i$'s, we may write:
\begin{align*}
\mathbb{E} f(\mb X_{1:n}) &~=~ \mathbb{E} \sup_{A \in \mathcal{A}} \left | \mathbb{P}(\mathbf{X} \in A) - \frac{1}{n} \sum_{i=1}^{n}\mathds{1}_{\mathbf{X}_i \in A} \right| \\
&~=~ \mathbb{E} \sup_{A \in \mathcal{A}} \left | \mathbb{E}\left [ \frac{1}{n} \sum_{i=1}^{n}\mathds{1}_{\mathbf{X}_i^{'} \in A}\right] - \frac{1}{n} \sum_{i=1}^{n}\mathds{1}_{\mathbf{X}_i \in A} \right|\\
&~\le~ \mathbb{E} \sup_{A \in \mathcal{A}} \left | \frac{1}{n} \sum_{i=1}^{n}\mathds{1}_{\mathbf{X}_i^{'} \in A} - \frac{1}{n} \sum_{i=1}^{n}\mathds{1}_{\mathbf{X}_i \in A} \right| \\
&~=~ \mathbb{E} \sup_{A \in \mathcal{A}} \left |\frac{1}{n} \sum_{i=1}^{n} \sigma_i \left ( \mathds{1}_{\mathbf{X}_i^{'} \in A} - \mathds{1}_{\mathbf{X}_i \in A} \right) \right|\\
&~\le~\mathbb{E} \sup_{A \in \mathcal{A}} \left |\frac{1}{n} \sum_{i=1}^{n} \sigma_i \mathds{1}_{\mathbf{X}_i^{'} \in A}  \right| ~+ \sup_{A \in \mathcal{A}} \left |\frac{1}{n} \sum_{i=1}^{n} -\sigma_i \mathds{1}_{\mathbf{X}_i \in A}  \right|\\
&~=~ 2 \mathcal{R}_n
\end{align*}
\end{proof}
\noindent
Combining (\ref{concentration-general}) with Lemma \ref{lem-rademacher} and the fact that $q \le 2p$ gives:
\begin{align}
\label{vc1}
\mathbb{P}\left [ f(\mb X_{1:n}) - 2 \mathcal{R}_n \ge t \right] ~\le~ e^{-\frac{n t^2}{4p + \frac{2t}{3}} }~.
\end{align}


Recall that the relative Rademacher average are defined in (\ref{def-extr-rad}) as $\mathcal{R}_{n,p} = \mathcal{R}_n /p$. 
It is well-known that $\mathcal{R}_n$ is of order $\mathcal{O}( (V_{\mathcal{A}}/n)^{1/2})$, see \cite{Kolt06} for instance. However, we hope a stronger bound than just $\mathcal{R}_{n,p} = \mathcal{O}(p^{-1}(V_{\mathcal{A}}/n)^{1/2})$ since $\frac{1}{np} \left | \sum_{i=1}^{n} \sigma_i \mathds{1}_{\mathbf{X}_i \in A}\right|$ with $\mathbb{P}(\mathbf{X}_i \in \mathbb{A})=p$ is expected to be like $\frac{1}{np} \left | \sum_{i=1}^{np} \sigma_i \mathds{1}_{\mathbf{Y}_i \in A}\right|$ with $\mathbf{Y}_i$ such that $\mathbb{P}(\mathbf{Y}_i \in \mathbb{A}) = 1$. The result below confirms this heuristic:

\begin{lemma}
\label{lem-relative-rademacher}
The relative Rademacher average ${\mathcal{R}}_{n,p}$ is of order $\mathcal{O}(\sqrt\frac{V_{\mathcal{A}}}{pn})$. 
\end{lemma}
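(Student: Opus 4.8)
The plan is to prove $\mathcal{R}_{n,p} = \mathcal{O}(\sqrt{V_{\mathcal{A}}/(pn)})$ by a chaining/entropy argument that carefully tracks the normalization $1/(np)$ and exploits the fact that the relevant indicators are supported on the low-probability union class $\mathbb{A}$. The key heuristic, already stated in the excerpt, is that $\frac{1}{np}|\sum_{i=1}^n \sigma_i \mathds{1}_{\mathbf{X}_i\in A}|$ behaves like a Rademacher average over the roughly $np$ observations that actually fall in $\mathbb{A}$. First I would condition on $N = \sum_{i=1}^n \mathds{1}_{\mathbf{X}_i\in\mathbb{A}}$, the number of sample points landing in the union class. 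Given $N=m$, these $m$ points are i.i.d.\ draws from the conditional law $\mathbb{P}(\,\cdot\mid \mathbf{X}\in\mathbb{A})$, and only they contribute to $\sum_i \sigma_i\mathds{1}_{\mathbf{X}_i\in A}$ (since $A\subseteq\mathbb{A}$). Thus the conditional Rademacher sum reduces to a standard (unnormalized) Rademacher average over $m$ points, which the classical bound controls by $C\sqrt{V_{\mathcal{A}}\, m}$.

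The main computation is then to take expectations. Writing $\Phi(m) = \mathbb{E}\big[\sup_{A}|\sum_{i=1}^m\sigma_i\mathds{1}_{\mathbf{Y}_i\in A}|\big]$ for the unnormalized average over $m$ conditional draws $\mathbf{Y}_i$, the classical VC bound gives $\Phi(m)\le C\sqrt{V_{\mathcal{A}}\,m}$. After conditioning, one obtains
\begin{align*}
\mathcal{R}_{n,p} \;=\; \frac{1}{np}\,\mathbb{E}\,\Phi(N) \;\le\; \frac{C}{np}\,\mathbb{E}\sqrt{V_{\mathcal{A}}\,N} \;\le\; \frac{C\sqrt{V_{\mathcal{A}}}}{np}\,\sqrt{\mathbb{E} N}\,,
\end{align*}
where the last step is Jensen's inequality applied to the concave map $m\mapsto\sqrt{m}$. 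Since $N$ is a binomial random variable with $\mathbb{E}N = np$, this yields $\mathcal{R}_{n,p}\le C\sqrt{V_{\mathcal{A}}}/(np)\cdot\sqrt{np} = C\sqrt{V_{\mathcal{A}}/(pn)}$, exactly as claimed.

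The step I expect to be the main obstacle is making the conditioning argument fully rigorous, in particular justifying that the supremum over $A\in\mathcal{A}$ of the Rademacher sum, once restricted to the points in $\mathbb{A}$, can legitimately be compared to a supremum over a VC-class of the same VC-dimension $V_{\mathcal{A}}$. The subtlety is that the trace of $\mathcal{A}$ on the conditional sample need not literally be $\mathcal{A}$, but its growth function is still bounded by $S_{\mathcal{A}}$, so Sauer's Lemma keeps the effective dimension at $V_{\mathcal{A}}$; this is what lets $\Phi(m)\le C\sqrt{V_{\mathcal{A}}\,m}$ hold uniformly in $m$. One must also handle the symmetry carefully: the Rademacher signs $\sigma_i$ attached to points outside $\mathbb{A}$ contribute nothing to any $\mathds{1}_{\mathbf{X}_i\in A}$, so they may be dropped before conditioning, which is precisely what reduces the problem to the $N$ relevant draws. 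A minor technical point is the $N=0$ case, which contributes zero and is harmless. Once these measurability and uniformity issues are dispatched, the remainder is the short Jensen computation above.
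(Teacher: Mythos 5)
Your proposal is correct and follows essentially the same route as the paper's proof: both reduce the sum to the binomial number $\kappa\sim Bin(n,p)$ of points falling in $\mathbb{A}$, viewed as i.i.d.\ draws from the conditional law $\mathbb{P}(\,\cdot\mid\mathbf{X}\in\mathbb{A})$, apply the classical bound $C\sqrt{V_{\mathcal{A}}\,m}$ to the resulting unnormalized Rademacher average, and conclude with Jensen's inequality on $m\mapsto\sqrt{m}$. The VC-trace worry you flag is not an obstacle, since the classical Rademacher bound is distribution-free and applies directly to the conditional law.
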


\begin{proof}
Let us defined \iid~\rv~$\mathbf{Y}_i$ independent from $\mathbf{X}_i$ whose law is the law of $\mathbf{X}$ conditioned on the event $\mathbf{X} \in \mathbb{A}$. If $\overset{d}{=}$ means equal in distribution it is easy to show that
$\sum_{i=1}^n \sigma_i \mathds{1}_{\mathbf{X}_i \in A} \overset{d}{=} \sum_{i=1}^{\kappa} \sigma_i \mathds{1}_{\mathbf{Y}_i \in A}$, where $\kappa \sim Bin(n,p)$ independent of the $\mathbf{Y}_i$'s. 
Thus, 
\begin{align*}
\mathcal{R}_{n,p} ~=~ \mathbb{E} \sup_{A \in \mathcal{A}} \frac{1}{np} \left | \sum_{i=1}^{n} \sigma_i \mathds{1}_{\mathbf{X}_i \in A}\right| &~=~ \mathbb{E} \sup_{A \in \mathcal{A}} \frac{1}{np} \left | \sum_{i=1}^{\kappa} \sigma_i \mathds{1}_{\mathbf{Y}_i \in A}\right| \\
&~=~ \mathbb{E} \left[\mathbb{E} \left[\sup_{A \in \mathcal{A}} \frac{1}{np} \left | \sum_{i=1}^{\kappa} \sigma_i \mathds{1}_{\mathbf{Y}_i \in A} \right|~~|~\kappa \right] \right]\\
&~=~\mathbb{E}\left [ \Phi(\kappa)\right]
\end{align*}
where $$\phi(K) = \mathbb{E} \left[\sup_{A \in \mathcal{A}} \frac{1}{np} \left | \sum_{i=1}^{K} \sigma_i \mathds{1}_{\mathbf{Y}_i \in A} \right|\right] = \frac{K}{np} \mathcal{R}_K \le \frac{K}{np} \frac{C \sqrt{V_{\mathcal{A}}}}{\sqrt K}~.$$
Thus,
\begin{align*}
\mathcal{R}_{n,p} ~\le~ \mathbb{E}\left [ \frac{\sqrt \kappa}{np} C\sqrt{V_{\mathcal{A}}}\right] ~\le~ \frac{\sqrt{\mathbb{E}[\kappa]}}{np} C\sqrt{V_{\mathcal{A}}} ~\le~ \frac{C\sqrt{V_{\mathcal{A}}}}{\sqrt{np}} ~.
\end{align*}

\end{proof}


\noindent
Finally we obtain from (\ref{vc1}) and Lemma \ref{lem-relative-rademacher} the following bound:
\begin{align}
\label{prop-princ}
\mathbb{P} \left [ \frac{1}{p} \sup_{A \in \mathcal{A}} \left | \mathbb{P}(\mathbf{X} \in A) - \frac{1}{n} \sum_{i=1}^n \mathds{1}_{\mathbf{X}_i \in A} \right| - 2 \mathcal{R}_{n,p} ~>~ t \right] ~~\le~~ e^{-\frac{np t^2}{4 + \frac{2t}{3}} }
\end{align}
\noindent
Solving $ \exp \left [ - \frac{np t^2}{4  + \frac{2}{3} t}\right]=\delta$ with $t > 0$ leads to $$t ~=~ \frac{1}{3np} \log \frac{1}{\delta} + \sqrt{\left ( \frac{1}{3np} \log \frac{1}{\delta}\right)^{2} + \frac{4}{np} \log \frac{1}{\delta} } ~:=~h(\delta)$$ so that
\begin{align*}
\mathbb{P} \left [ \frac{1}{p} \sup_{A \in \mathcal{A}} \left | \mathbb{P}(\mathbf{X} \in A) - \frac{1}{n} \sum_{i=1}^n \mathds{1}_{\mathbf{X}_i \in A} \right| - 2 \mathcal{R}_{n,p} ~> h(\delta) \right] ~~\le~~ \delta
\end{align*}
\noindent
Using $\sqrt{a+b} \le \sqrt a + \sqrt b~$ if $a,b \ge 0$, we have $h(\delta) < \frac{2}{3np} \log \frac{1}{\delta} + 2 \sqrt{\frac{1}{np} \log \frac{1}{\delta}} $. In the case of $\delta \ge e^{-np}$, $\frac{2}{3np} \log \frac{1}{\delta} \le \frac{2}{3} \sqrt{ \frac{1}{np}\log \frac{1}{\delta}} $ so that 
$h(\delta) < 3 \sqrt{\frac{1}{np} \log \frac{1}{\delta}} $. This ends the proof.

\section{Note on Remark \ref{rk:prediction}}
To obtain the bound in (\ref{prediction:rates}), the following easy to show inequality is needed before  applying Theorem~\ref{thm-princ} :
\begin{align*}
&\sup_{g\in \mathcal{G}}| {L}_{\alpha, n}(g)- L_{\alpha}(g)  | ~~ \le ~~ \frac{1}{\alpha} \Bigg[ \sup_{g\in \mathcal{G}} \left| \mathbb{P} \left(Y\neq g(\mathbf{X}),~ \| \mathbf{X}\|\ >t_\alpha \right) - \frac{1}{n}\sum_{i=1}^n\mathds{I}_{\{Y_i\neq
   g(\mathbf{X}_i),~ \| \mathbf{X}_i\| > t_\alpha \}} \right| \\
&~~~~~~~~~~~~~~~~~~~~~~~~~~~~~~~~~~~~~~~~~~~~~~~~~~~~~~~~~~~~~~~~~~~~~~~~+~ \left| \mathbb{P} \left(\| \mathbf{X}\|\ >t_\alpha \right) - \frac{1}{n}\sum_{i=1}^n\mathds{I}_{\{ \| \mathbf{X}_i\| > t_\alpha  \}} \right| ~~+~~ \frac{1}{n}  \Bigg] ~.
\end{align*}

Note that the final objective would be   to bound the quantity 
$\sup_{g\in \mathcal{G}}| {L}_{\alpha}(g)- L_{\alpha}(g^*_\alpha)  |$,
where $g^*_\alpha$ is a Bayes classifier for the problem at stake,
\ie~a  solution of the conditional risk  minimization problem  $\inf_{\{ g \text{
    measurable}\}} L_\alpha(g)$. Such a bound involves a  bias term $ \inf_{g\in
    \mathcal{G}}L_\alpha(g)-L_\alpha(g_\alpha^*)$, as in the classical
  setting. Further, it  can  be shown that the standard Bayes classifier
  $g^*(\mb x) := 2\mathbb{I}\{\eta(\mathbf{x})>1/2\}-1$ (where $\eta(\mb
  x) =
  \P(Y=1\; |\; \mb X=\mb x)$) is also   a solution of the conditional
  risk minimization problem. 
 Finally,  the conditional bias
 $ \inf_{g\in \mathcal{G}}L_\alpha(g)-L_\alpha(g_\alpha^*)$ can
 be expressed as
$\frac{1}{\alpha}\inf_{g \in \mathcal{G}} \mathbb{E} \left [ |2
  \eta(\mathbf{X})-1|\mathds{1}_{g(\mathbf{X}) \neq g^*(\mathbf{X})
  }\mathds{1}_{ \|\mathbf{X}\| \ge t_\alpha}\right] $, to be compared
with the standard bias $\inf_{g \in \mathcal{G}} \mathbb{E} \left [ |2
  \eta(\mathbf{X})-1|\mathds{1}_{g(\mathbf{X}) \neq g^*(\mathbf{X})
  }\right]$.




\end{document}